\newtheorem{lemma} {Lemma} [section]
\newtheorem{theorem} [lemma] {Theorem}
\newtheorem{cor} [lemma] {Corollary}
\newtheorem{remark}[lemma]{Remark}
\newenvironment{proof}{{\sc Proof:}}{{\hspace*{\fill} $\square$\\}}
\numberwithin{equation}{section}
\title{\bf Weierstrass points on Kummer extensions}
\date{}
\author{Miriam Abd\'on\thanks{IME, Univ. Federal Fluminense, Rua M\'ario Santos Braga s/n, CEP 24.020-140, Niter\'oi,Brazil
(miriam@mat.uff.br).}\and 
Herivelto Borges\thanks{Inst. de Ci\^encias Matem\'aticas e de Computa\c c\~ao, Universidade de S\~ao Paulo, Av. Trabalhador S\~ao Carlense, CEP 13560-970, S\~ao Carlos, Brazil(hborges@icmc.usp.br).}\and
Luciane Quoos\thanks{Instituto de Matem\'atica, Universidade Federal do Rio de Janeiro, Cidade Universit\'aria, CEP 21941-909,  Rio de Janeiro, Brazil (luciane@im.ufrj.br).\newline The second author was partially supported  by FAPESP-Brazil grant 2011/19446-3.}}
\def\F{\mathbb F}
\def\P{\mathbb P}
\def\N{\mathbb N}
\def\Z{\mathbb Z}
\def\cA1{{\mathcal A_1}(P_\infty)}
\def\cD{\mathcal D}
\def\cL{\mathcal L}
\def\P{\mathbb P}
\def\Fq{{\mathbb{F}_q}}
\def\Fq2{{\mathbb F}_{q^2}}
\begin{document}
\maketitle

\begin{abstract}
\maketitle

\noindent
For Kummer extensions  $y^m=f(x)$, we discuss conditions for an integer be a Weierstrass gap at a place $P$. In the case of totally ramified places, the conditions will be necessary and sufficient. As a consequence, we extend independent results of several authors.

\vspace{0.5cm}
\noindent
MSC2010 Subject Classification Numbers: Primary 14H55, 11R58; Secondary 14Hxx.
\end{abstract}

\section{Introduction}\label{sec:intro}

Let $F$ be an algebraic function field in one variable defined over an algebraically closed field $K$ of characteristic $p\geq 0$. For any given place $P$ of $F$, a number $s$ is called a non-gap at $P$ if there exists
a function $z \in F$ such that the pole divisor of $z$ is $(z)_{\infty}=sP$. If there is no such function, the number $s$ is called a gap at $P$. If $g$ is the genus of $F$, it follows from the Riemann-Roch theorem that there are exactly $g$ gaps at any  place $P$ of $F$ \cite{Sti}.
It is a well known fact that for all, but finitely many places $P$, the gap sequence (of $g$ numbers) is always the same. This sequence is called the {\em gap sequence} of $F$. When the  gap sequence of $F$ is $(1, 2,\dots,g)$, the sequence is called {\em ordinary}, and we say that  the function field $F$ is  {\em classical}, otherwise $F$ is called nonclassical. The places for which the gap sequence is not equal to the gap sequence of $F$ are called {\em Weierstrass}.

Since Weierstrass proved his  L\"uckensatz (or gap theorem) in the early 1860s, the theory of Weierstrass points has contributed to the development
of several areas. The study of automorphisms of algebraic curves (e.g. \cite{Hurwitz},\cite{Lew90}), as well as  the number of rational points on curves over finite fields (e.g. \cite{StVo}, \cite{GM})  are classical examples. The theory of algebraic geometric codes is another  (recent) example of  area benefitting  from the  study of  Weierstrass points (e.g. \cite{gretchen}, \cite{carvalho}). 
For a more complete and detailed account on the applications of this theory, we refer to the survey paper  \cite{andrea}. 

%

The characterization of  Weierstrass points is a fundamental problem in the theory of algebraic curves. There are some situations in which the fully ramified places of $F/K(x)$ are known to be Weierstrass. For example, if the Kummer extension $y^m=f(x)$ is classical,  and $f(x)$ has at least five roots then, by Lewitte's results in \cite{Lew},  all branch points are Weierstrass. Another example would be the case of cyclic extensions $F/K(x)$ of degree $p^n$, where $char(K)=p$ and $n \geq 2$. In \cite{V-M}, Valentini and Madan proved that for such extensions  all fully ramified places are Weierstrass points.  The result remains valid for certain elementary abelian extensions of $K(x)$ \cite{garcia}. In general, when $F$ is classical, the number of Weierstrass points counted with their weight is $g^3-g.$ Estimating the exact weight of Weierstrass points is another  challenging problem in the  theory.

In this paper, we will be interested in providing conditions for an integer $s$ be a gap at a place $P$ in a Kummer extension $F/K(x).$  These conditions are derived from  our main result, Theorem \ref{main}. As a consequence,  independent results of several authors will be extendend.

Our first  application of Theorem \ref{main} is in the theory of maximal curves over finite fields.
In 2008, using  Cartier Operators, Garcia and Tafazolian  \cite{GT2} proved that if  the Fermat curve $y^m=1-x^m$ is  maximal over $\mathbb{F}_{q^2}$,  then $m$  must be a  divisor of $q+1$. The converse is a well known result.  In fact, since the Fermat curve is covered by the Hermitian curve $x^{q+1}+y^{q+1}+1=0$ over $\mathbb{F}_{q^2}$, the $\mathbb{F}_{q^2}$-maximality  follows from a result J. P. Serre  \cite{La}. In 2010, Tafazolian used results of supersingular curves and simplified the proof given in 2008 \cite{T3}. A natural question that arises is whether this result
can be extended to a larger family of curves of type $y^m=f(x)$. This question was somewhat  addressed in subsequent papers by  Garcia, Tafazolian and Torres (\cite{GT1},\cite{T3},\cite{T4},\cite{TT}), where the same result was proved when $1-x^m$ is replaced by some other polynomials $f(x) \in \mathbb{F}_{q^2}[x]$. In section \ref{maximal}, we present a  general class of polynomials $f(x) \in \mathbb{F}_{q^2}[x]$ for which the $\mathbb{F}_{q^2}$-maximality of  $y^m=f(x)$ implies $m|(q+1)$. This will subsume most of the previous results.

In the 1950's,  Hasse \cite{Ha} computed the Weierstrass weight at  totally ramified points of the classical Fermat curves $y^m=x^m-1, m\geq 2$. We apply our results to show how this can be easily extended to classical Kummer curves  $y^m=f(x)$, where $f(x)\in K[x]$ is separable. For this same type of classical curve, since the number of Weierstrass points counted with multiplicity is $g^3-g$, Towse   \cite[Corollary 10]{To} computed a lower bound for
$$\lim_{d \to \infty} \frac{BW}{g^3-g},$$ 
\noindent where  $BW$ is the Weierstrass weight of all totally ramified places, $g$ is the genus of the curve and $d$ is the degree of $f(x)$. In this paper we find the exact value of this limit.

For the same curves $y^m=f(x)$, where $f(x)$ is a separable polynomial, we show that $1, 2, \dots, m-2$ are gaps at  the generic points. This  extends a result of Leopoldt \cite{Leo}.

In \cite[Theorem 3]{V-M}, Valentini and Madan proved that in a Kummer extension $y^m=f(x)$ of degree $m$, if at least $m+3$ places of $F$ are fully ramified, then every totally ramified place is Weierstrass. We  extend the result by replacing the number of fully ramified places with the number of roots of $f(x).$ 

We also point out that the classicality of  hyperelliptic curves is established as an immediate  consequence of  Theorem \ref{main}.  That is actually a retrieval of a result due to F. K. Schmidt \cite{Sch}.

 This paper is organized as follows. In Section $2$, we set up the notation and present  some  preliminary results. In Section $3$, we will build on a result of Maharaj \cite{M} to prove Theorem \ref{main}. In Section $4$, as a first application, we establish  general conditions for which the   $\F_{q^2}$-maximality of  the curve $y^m=f(x)$ implies $m|(q+1)$. This improves upon  recent results in \cite{GT1},\cite{T3},\cite{T4} and \cite{TT}. In Section $5$, we present a list of sufficient conditions for  which all totally ramified places are Weierstrass. We end our discussion in Section \ref{sep}, where Theorem \ref{main} is used in the particular case where $f(x)$ is a separable polynomial.  This case is the source of many of our extended results.

\section{Notations and Preliminaries}

We begin by establishing some notations (see \cite{Sti} for more details).

Let $F/K$ be an algebraic function field in one variable, where $K$ is algebraically closed of characteristic $p\geq 0$. We denote by $\P(F)$ the set of places of $F$ and by $\cD_F$ the (additively written) free abelian group generated by the places of $F$. The elements $D$ in $\cD_F$ are called {\it divisors} and can be written as,
$$
D=\sum_{P\in \P(F)}n_P\,P\quad \text{ with } n_P\in \Z, \text{ almost all } n_P=0.
$$
The degree of a divisor $D$ is $deg(D)=\sum\limits_{P\in \P(F)}n_P.$
Given a divisor $D$ in $\cD_F$, the Riemann-Roch vector space associated to $D$ is defined by
$$
\cL(D):=\{z\in F\,|\, (z)\ge -D\}\cup \{0\}.
$$
We will denote by $\ell(D)=dim\, \cL(D)$ as vector space over the field of constants $K$. From  Riemann-Roch it follows that 
$$
\ell(D) = deg(D)+1-g \text{ if } deg(D)\ge 2g-1, 
$$
where $g$ is the genus of $F$.
Let $P$ in $\P(F)$, the Weierstrass non-gap semigroup at $P$ is:
$$H(P)=\{s\in \N_0\,|\,  (z)_\infty=sP \text{ for some } z\in F\} \cup \{0\}.$$ 
A non-negative integer $s$ is called a non-gap at $P$ if $s\in H(P)$, and a gap otherwise.
The Weierstrass Gap Theorem asseverates that $s$ is a gap at a place of degree one $P$ if and only if $\ell((s-1)P)=\ell(sP)$. As a consequence there exists $g$ gaps, $1=\lambda_1<\cdots<\lambda_g\le 2g-1$.
For any field $K \subseteq E\subseteq F$, writing the divisor $D$ of $F$ as 
$$
D = \sum_{R\in \P(E)}\,\sum_{\substack{{Q\in \P(F)}\\{Q|R}}}\, n_Q\, Q.
$$
We define the restriction of $D$ to $E$ as the divisor $D|_E$ in $E$ given by:
$$
D_{|E}:= \sum_{R\in \P(E)} min\,\bigg\{\bigg\lfloor\frac{n_Q}{e(Q|R)}\bigg\rfloor: 
{Q|R}\bigg\}\,R,
$$
where $e(Q|R)$ is the ramification index of $Q$ over $R$.

Hereafter a {\it Kummer extension} $F/K(x)$ is a field extension defined by 
$$y^m=f(x)=\prod_{i=1}^{r}(x-\alpha_i)^{\lambda_i}, \text{ with } m \geq 2,\, p\nmid m,\, \text{ and } 0 <\lambda_i<m,$$
where $f(x) \in K[x]$ is not a $d^{th}$ power (where $d\mid m$) of an  element in $K(x)$, and  $\alpha_1,\ldots,\alpha_r$ are pairwise distinct.

Throughout the paper, $\gcd(a, b)$ will be denoted by $(a,b)$.

The next result  (see \cite{M}, Theorem 2.2.) will be the main ingredient in the proof of Theorem \ref{main}.

\begin{theorem}[Maharaj] \label{ThMaharaj}
Let $F/K(x)$ be a Kummer extension of degree $m$ defined by $y^m=f(x)$.  Then for any divisor $D$ of $F$ which is invariant by the action of $Gal(F/K(x))$, we have that
$$ \mathcal{L}(D)= \bigoplus\limits_{t=0}^{m-1} L([D+(y^t)]_{|K(x)})\,y^t,$$
where $[D+(y^t)]_{|K(x)}$ denotes the restriction to $K(x)$ of the divisor $D+(y^t).$
\end{theorem}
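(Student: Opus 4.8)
The plan is to exploit the Galois action of $G:=\mathrm{Gal}(F/K(x))$, which is cyclic of order $m$ and generated by the automorphism $\sigma$ with $\sigma(y)=\zeta y$ for a fixed primitive $m$-th root of unity $\zeta\in K$ (these exist since $p\nmid m$). Because $D$ is $G$-invariant, $\cL(D)$ is a $G$-stable $K$-subspace of $F$: if $(z)\ge -D$ then $(\tau z)=\tau\big((z)\big)\ge -\tau(D)=-D$ for every $\tau\in G$. Since $|G|=m$ is prime to $p=\mathrm{char}\,K$, the operators $\pi_t=\tfrac1m\sum_{j=0}^{m-1}\zeta^{-tj}\sigma^{j}\in K[G]$, $t=0,\dots,m-1$, are well defined orthogonal idempotents and decompose $\cL(D)$ as the direct sum of the eigenspaces $\cL(D)_t:=\{z\in\cL(D):\sigma(z)=\zeta^{t}z\}$. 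It therefore suffices to identify $\cL(D)_t$ with $L\big([D+(y^{t})]_{|K(x)}\big)\,y^{t}$.

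For that, recall that $\{1,y,\dots,y^{m-1}\}$ is a $K(x)$-basis of $F$ and $F^{G}=K(x)$. If $z\in\cL(D)_t$ and $w:=zy^{-t}$, then $\sigma(w)=\sigma(z)\sigma(y)^{-t}=(\zeta^{t}z)(\zeta^{-t}y^{-t})=w$, so $w\in K(x)$ and $z=wy^{t}$ with $w\in K(x)$; conversely every $z=wy^{t}$ with $w\in K(x)$ lies in the $\zeta^{t}$-eigenspace, and it belongs to $\cL(D)$ exactly when $(z)\ge -D$, i.e. when $(w)\ge -(D+(y^{t}))$. Since $\sigma\big((y^{t})\big)=(\zeta^{t}y^{t})=(y^{t})$, the divisor $A:=D+(y^{t})$ (computed in $F$) is again $G$-invariant, so everything reduces to the following claim, which I expect to be the main obstacle: for $w\in K(x)$ and a $G$-invariant divisor $A$ of $F$,
$$ (w)_F\ge -A \quad\Longleftrightarrow\quad (w)_{K(x)}\ge -A_{|K(x)}. $$

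To prove the claim, write $A=\sum_{R}\sum_{Q|R}n_{Q}\,Q$. Since $F/K(x)$ is Galois, $G$ acts transitively on the places $Q$ over a given $R\in\P(K(x))$ and the ramification index $e(Q|R)$ is independent of $Q$; call it $e_{R}$. $G$-invariance of $A$ then forces all coefficients $n_{Q}$ with $Q|R$ to equal a common value $n_{R}$, so the definition of restriction gives $A_{|K(x)}=\sum_{R}\lfloor n_{R}/e_{R}\rfloor\,R$. On the other hand, for $w\in K(x)$ one has $v_{Q}(w)=e_{R}\,v_{R}(w)$ for every $Q|R$. Hence $(w)_F\ge -A$ means $e_{R}v_{R}(w)\ge -n_{R}$ for all $R$, and since $v_{R}(w)\in\Z$ this is equivalent to $v_{R}(w)\ge\lceil -n_{R}/e_{R}\rceil=-\lfloor n_{R}/e_{R}\rfloor$ for all $R$, i.e. to $(w)_{K(x)}\ge -A_{|K(x)}$. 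Combining this with the previous paragraph yields $\cL(D)_t=\{wy^{t}:w\in K(x),\ (w)_{K(x)}\ge -[D+(y^{t})]_{|K(x)}\}=L\big([D+(y^{t})]_{|K(x)}\big)\,y^{t}$, and summing over $t=0,\dots,m-1$ proves the theorem. The only delicate point is the interplay between the floor in the definition of $A_{|K(x)}$ and the conorm relation $v_{Q}(w)=e_{R}v_{R}(w)$; once the Galois hypotheses are used to make $n_{Q}$ and $e(Q|R)$ constant over each $R$, it comes down to the elementary fact that $e_{R}a\ge -n\Leftrightarrow a\ge -\lfloor n/e_{R}\rfloor$ for integers $a$.
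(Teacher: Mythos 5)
The paper does not prove this statement: it is quoted verbatim as Theorem 2.2 of Maharaj's paper \cite{M}, so there is no in-paper argument to compare against. Your proof is correct and is essentially the standard one: since $p\nmid m$ and $K$ contains the $m$-th roots of unity, $\mathcal{L}(D)$ splits into the $\zeta^t$-eigenspaces of the generator of $Gal(F/K(x))$, each eigenspace is $K(x)\cdot y^t$ intersected with $\mathcal{L}(D)$, and the passage from $(w)_F\ge -(D+(y^t))$ to $(w)_{K(x)}\ge -[D+(y^t)]_{|K(x)}$ is exactly the floor/conorm computation you isolate (using that Galois invariance makes the coefficients and ramification indices constant over each place of $K(x)$, so the $\min$ in the definition of the restriction is harmless). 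This matches the argument in \cite{M}, and all the hypotheses you invoke ($\zeta\in K$, $m$ invertible, transitivity of the Galois action on fibers) are available in the paper's setting.
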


\section{Main result}

In this Section we prove our main result, Theorem \ref{main}. We begin collecting some simple facts which will be used throughout our computations. 
The floor, ceiling and fractional part functions, will be denoted by $\Big\lfloor \cdot \Big\rfloor,\Big\lceil \cdot \Big\rceil$ and $\Big\lbrace \cdot \Big\rbrace$ respectively.

\begin{remark}\label{lulu} Given $x,x_1,\ldots,x_k \in \mathbb{R}$, and integers $ m \geq 1, a, \lambda$. The following holds: 
\begin{enumerate}
\item[(i)]$ \Big\lceil-x\Big\rceil=-\Big\lfloor x\Big\rfloor.$
\item[(ii)]$\Big\lceil \sum\limits_{i=1}^{k} x_i\Big\rceil=\sum\limits_{i=1}^{k}\Big\lfloor  x_i \Big\rfloor+\Big\lceil\sum\limits_{i=1}^{k}\Big\lbrace x_i\Big\rbrace\Big\rceil.$
\item[(iii)] If $a\equiv\, r \mod\, m$, with $0\leq r \leq m-1$, then $\Big\lbrace\frac{a}{m}\Big\rbrace=\frac{r}{m}.$
\item[(iv)] If $(\lambda, m)=1$, then for any integer $n$ there exists a unique  $t\in \{0, \dots, m-1\}$ such that $t\lambda \equiv\, n \, \mod \, m$.
\end{enumerate}
\end{remark}

\begin{lemma}\label{inteiro}
If  $\lambda_1,\ldots,\lambda_r \in \{1,\ldots,m-1\}$, then 
\begin{enumerate}
\item[(i)] there exists $t\in \{1,\ldots,m-1\}$ such that
$$ \sum\limits_{i=1}^r\Big\lbrace\frac{t\lambda_i}{m}\Big\rbrace \geq r/2.$$
\item[(ii)] If  $\lambda_1=\cdots=\lambda_r$, and $(\lambda_1, m)=1$, then there exists $t\in \{1,\ldots,m-1\}$ such that
$$ \sum\limits_{i=1}^r\Big\lbrace\frac{t\lambda_i}{m}\Big\rbrace =r(m-1)/m.$$
\end{enumerate}

\end{lemma}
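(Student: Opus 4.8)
The plan is to prove both parts by an averaging argument over $t \in \{0,1,\dots,m-1\}$, exploiting the fact that for a fixed $\lambda$ with $(\lambda,m)=d$, as $t$ ranges over a complete residue system mod $m$, the value $t\lambda \bmod m$ takes each multiple of $d$ exactly $d$ times.

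For part (i), first I would compute the average of $\sum_{i=1}^r \{t\lambda_i/m\}$ over $t=0,1,\dots,m-1$. For a single index $i$, writing $d_i=(\lambda_i,m)$, the sum $\sum_{t=0}^{m-1}\{t\lambda_i/m\}$ equals $d_i \cdot \sum_{j=0}^{m/d_i-1} \{jd_i/m\} = d_i \cdot \frac{d_i}{m}\cdot\frac{(m/d_i-1)(m/d_i)}{2} = \frac{m - d_i}{2}$. Since $1\le\lambda_i\le m-1$ forces $d_i \le m/2$ (indeed $d_i\mid m$ and $d_i\neq m$, so $d_i\le m/2$), each term contributes at least $\frac{m-m/2}{2}=\frac{m}{4}$... wait, I should be more careful: I want the average over $t$ of the whole sum to be at least $r/2$. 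We have
$$\frac{1}{m}\sum_{t=0}^{m-1}\sum_{i=1}^r\Big\lbrace\frac{t\lambda_i}{m}\Big\rbrace = \sum_{i=1}^r \frac{m-d_i}{2m} = \sum_{i=1}^r \frac{1}{2}\Big(1-\frac{d_i}{m}\Big) \geq \sum_{i=1}^r \frac{1}{2}\cdot\frac{1}{2} = \frac{r}{4},$$
which only gives $r/4$, not $r/2$. So a naive average is insufficient, and the main obstacle is precisely this gap. The fix is to exclude $t=0$ (which contributes $0$) and observe that the function $t\mapsto \sum_i\{t\lambda_i/m\}$ is "antisymmetric" under $t\mapsto m-t$: for any $t$ with $m\nmid t\lambda_i$ one has $\{t\lambda_i/m\}+\{(m-t)\lambda_i/m\}=1$, and the two are equal only in the degenerate case $m\mid t\lambda_i$ where both are $0$. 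Hence for each $i$ and each $t\in\{1,\dots,m-1\}$, $\{t\lambda_i/m\}+\{(m-t)\lambda_i/m\}\le 1$ with equality unless $d_i\mid t$. Summing over a pair $\{t,m-t\}$ gives $\sum_i(\{t\lambda_i/m\}+\{(m-t)\lambda_i/m\})\le r$, so the average over the $m-1$ nonzero values of $t$ is $\le r/2$ — that's the wrong direction too. The correct approach: pair up $t$ and $m-t$; one of the two has $\sum_i\{t\lambda_i/m\}\ge \sum_i\{(m-t)\lambda_i/m\}$, and for that one the sum is $\ge \frac12\sum_i(\{t\lambda_i/m\}+\{(m-t)\lambda_i/m\})$. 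Since not all $\lambda_i$ can be such that $m\mid t\lambda_i$ for a well-chosen $t$ (e.g. take $t=1$: then $\{t\lambda_i/m\}=\lambda_i/m>0$ for all $i$), there is at least one $t\in\{1,\dots,m-1\}$ with $\sum_i\{t\lambda_i/m\}+\sum_i\{(m-t)\lambda_i/m\}=r$, forcing the larger of the two to be $\ge r/2$. Concretely: take $t_0=1$; then $\{t_0\lambda_i/m\}+\{(m-t_0)\lambda_i/m\}=1$ for every $i$ (as $0<\lambda_i<m$), so the two sums add to $r$, and whichever of $t_0=1$ or $m-1$ gives the larger sum works. This $t\in\{1,\dots,m-1\}$ proves (i).

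For part (ii), with $\lambda_1=\cdots=\lambda_r=\lambda$ and $(\lambda,m)=1$, I would invoke Remark \ref{lulu}(iv): since $(\lambda,m)=1$, there is a unique $t\in\{0,\dots,m-1\}$ with $t\lambda\equiv m-1\pmod m$, and this $t$ is nonzero because $m-1\not\equiv 0$. Then by Remark \ref{lulu}(iii), $\{t\lambda/m\}=(m-1)/m$ for this $t$, and since all $\lambda_i$ are equal,
$$\sum_{i=1}^r\Big\lbrace\frac{t\lambda_i}{m}\Big\rbrace = r\cdot\frac{m-1}{m} = \frac{r(m-1)}{m},$$
as required. Part (ii) is essentially immediate from the stated remarks; the only real content is part (i), and within it the one delicate point is arguing that some admissible $t$ achieves the bound rather than merely that the average does — handled cleanly by the $t\leftrightarrow m-t$ pairing applied to $t=1$.
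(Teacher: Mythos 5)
Your final argument is correct and is essentially the paper's own proof: for part (i) the paper likewise pairs $t$ with $m-t$ for a $t$ coprime to $m$ (so that $\{t\lambda_i/m\}+\{(m-t)\lambda_i/m\}=1$ for each $i$, the two sums adding to $r$), and for part (ii) it likewise invokes Remark \ref{lulu} with $n=m-1$. The averaging detour in the middle is unnecessary but harmless, since you land on the same pairing argument with $t_0=1$.
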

\begin{proof}
Note that for any $\alpha \in \{1,\ldots,m-1\}$, with $(\alpha,m)=1$, we have $\Big\lbrace\frac{-\alpha\lambda_i}{m}\Big\rbrace=1-\Big\lbrace\frac{\alpha\lambda_i}{m}\Big\rbrace$, and then
$$\sum\limits_{i=1}^r\Big\lbrace\frac{(m-\alpha)\lambda_i}{m}\Big\rbrace=\sum\limits_{i=1}^r\Big\lbrace-\frac{\alpha\lambda_i}{m}\Big\rbrace=r-\sum\limits_{i=1}^r\Big\lbrace\frac{\alpha\lambda_i}{m}\Big\rbrace,$$
that is, 
$$\sum\limits_{i=1}^r\Big\lbrace\frac{(m-\alpha)\lambda_i}{m}\Big\rbrace+\sum\limits_{i=1}^r\Big\lbrace\frac{\alpha\lambda_i}{m}\Big\rbrace=r.$$

Therefore, either $\sum\limits_{i=1}^r\Big\lbrace\frac{\alpha\lambda_i}{m}\Big\rbrace\geq r/2$ or $\sum\limits_{i=1}^r\Big\lbrace\frac{(m-\alpha)\lambda_i}{m}\Big\rbrace\geq r/2,$ and the result follows.

The item (ii) follows from Remark \ref{lulu}, choosing $n=m-1$.
\end{proof}

\begin{lemma}\label{lema-t}

If $s$ and $m$  are positive integers, and  $\lambda \in \mathbb{Z}$ is such that $(\lambda,m)=1$, then there exists a unique $t\in \{0,\ldots, m-1\}$  satisfying  
$$s+\lambda t =(\Big\lfloor\frac{\lambda t}{m}\Big\rfloor+\Big\lfloor\frac{s}{m}\Big\rfloor)m + m-1.$$
 Moreover, for any $i\in \mathbb{Z}$,
\begin{equation*}
\Big\lfloor\frac{s+1+i\,\lambda}{m}\Big\rfloor =
\begin{cases}
\Big\lfloor\frac{i\,\lambda}{m}\Big\rfloor+\Big\lfloor\frac{s}{m}\Big\rfloor+1=\Big\lfloor\frac{s+i\,\lambda}{m}\Big\rfloor+1, &\text{ if }
     i \equiv t \mod m\\
    \text{}\\ 
\Big\lfloor\frac{s+i\,\lambda}{m}\Big\rfloor,  &\text{ otherwise}.\, 
\end{cases}
\end{equation*}

\end{lemma}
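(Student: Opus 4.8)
The plan is to treat the two assertions in sequence, deriving the ``moreover'' part from the first. For the existence and uniqueness of $t$, I would first rewrite the defining equation. Since $\lfloor \lambda t/m\rfloor m = \lambda t - (\lambda t \bmod m)$ and similarly $\lfloor s/m\rfloor m = s - (s\bmod m)$, the equation $s+\lambda t = (\lfloor \lambda t/m\rfloor + \lfloor s/m\rfloor)m + m-1$ collapses, after cancelling $s+\lambda t$ from both sides, to the congruence-type condition $(\lambda t \bmod m) + (s \bmod m) = m-1$; equivalently $\lambda t \equiv m-1-s \pmod m$. Because $(\lambda,m)=1$, part (iv) of Remark \ref{lulu} gives a unique $t\in\{0,\dots,m-1\}$ with $\lambda t \equiv m-1-s \pmod m$, which is exactly the $t$ we want — but I should be a little careful: the reduction above used $(\lambda t\bmod m)+(s\bmod m)=m-1$, and since each summand lies in $\{0,\dots,m-1\}$ and their sum is $m-1\le 2m-2$, there is no ``carry'' issue, so the congruence characterization is genuinely equivalent to the stated equality. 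This settles the first claim.

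For the ``moreover'' part, fix $i\in\mathbb Z$ and write $s+1+i\lambda = m\lfloor (s+i\lambda)/m\rfloor + \rho$ where $\rho = ((s+i\lambda)\bmod m) + 1 \in \{1,\dots,m\}$. Then $\lfloor (s+1+i\lambda)/m\rfloor$ equals $\lfloor (s+i\lambda)/m\rfloor$ unless $\rho = m$, in which case it equals $\lfloor(s+i\lambda)/m\rfloor + 1$. Now $\rho=m$ is equivalent to $(s+i\lambda)\bmod m = m-1$, i.e. $i\lambda \equiv m-1-s\pmod m$, which by the uniqueness just established holds precisely when $i\equiv t\pmod m$. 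This gives the case split. It remains to check that when $i\equiv t\pmod m$ one indeed has $\lfloor i\lambda/m\rfloor + \lfloor s/m\rfloor + 1 = \lfloor (s+i\lambda)/m\rfloor + 1$; this is the identity $\lfloor i\lambda/m\rfloor + \lfloor s/m\rfloor = \lfloor (s+i\lambda)/m\rfloor$, which holds exactly because $(i\lambda\bmod m) + (s\bmod m) = m-1 < m$ (no carry), using again the characterization of $t$. One subtlety worth a sentence: for $i\not\equiv t\pmod m$ the stated equality only asserts the $\lfloor\cdot\rfloor$ is unchanged and makes no claim about splitting it as a sum, so nothing further is needed there.

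The only mildly delicate point — the ``main obstacle,'' such as it is — is bookkeeping the no-carry phenomenon: making sure that the passage between the floor equality in the display and the congruence $\lambda t\equiv m-1-s\pmod m$ is reversible, which hinges on both residues $(\lambda t\bmod m)$ and $(s\bmod m)$ lying in $\{0,\dots,m-1\}$ so that their sum equalling $m-1$ forces each to be determined and forces $\lfloor(s+\lambda t)/m\rfloor=\lfloor \lambda t/m\rfloor+\lfloor s/m\rfloor$. Everything else is a direct application of Remark \ref{lulu}(iv) and elementary properties of the floor function; I would present it compactly, doing the residue computation once and invoking it in both halves of the statement.
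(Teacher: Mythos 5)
Your proposal is correct and follows essentially the same route as the paper: both reduce the displayed equation to the congruence $s+\lambda t\equiv m-1 \pmod m$ together with a no-carry floor identity, invoke Remark \ref{lulu}(iv) for existence and uniqueness of $t$, and settle the ``moreover'' part by observing that the residue of $s+i\lambda$ is $m-1$ exactly when $i\equiv t\pmod m$, so that adding $1$ increments the floor precisely in that case. The only cosmetic difference is that you phrase the computation via residues mod $m$ while the paper writes out the quotient–remainder decomposition explicitly, and you keep $i$ general where the paper first reduces to $i\in\{0,\dots,m-1\}$; the substance is identical.
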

\begin{proof}
Since  $(\lambda,m)=1$, by Remark \ref{lulu} there exists a unique $t$ in $\{0,\ldots,m-1\}$ such that $s+t\lambda\equiv\, m-1 \mod \, m.$ Set  $q=\Big\lfloor\frac{s+\lambda t}{m}\Big\rfloor$, and write $s=m\Big\lfloor\frac{s}{m}\Big\rfloor + s_0$ with $s_0\in \{0,\ldots,m-1\}$. Thus  $t\lambda=m(q-\Big\lfloor\frac{s}{m}\Big\rfloor)+m-1-s_0$, and since $m-1-s_0\in \{0,\ldots,m-1\}$, it follows that  $\Big\lfloor\frac{\lambda t}{m}\Big\rfloor=q-\Big\lfloor\frac{s}{m}\Big\rfloor=\Big\lfloor\frac{s+\lambda t}{m}\Big\rfloor-\Big\lfloor\frac{s}{m}\Big\rfloor,$ which proves the first assertion.
For the second  assertion, note that we may assume $i\in \{0,1,\ldots,m-1\}$. Since $t$ is unique, for any  value $i\in\{0,\ldots,m-1\}\backslash \{t\}$, we find that $s+\lambda i \equiv r \mod m$, for some $r\in\{0,\ldots, m-2\}$ which gives $\Big\lfloor\frac{s+1+\lambda i}{m}\Big\rfloor=\Big\lfloor\frac{s+\lambda i}{m}\Big\rfloor$, 
and the result follows.
\end{proof}

The following Lemma will be useful to establish some consequences of Theorem \ref{main}.

\begin{lemma}\label{dimension}

Let $F/K(x)$ be a Kummer extension given by 
$y^m=f(x)$. Let $P$ be a place of $K(x)$, and  $Q_1,\ldots, Q_r$ be all the distinct places of $F$ lying over $P$. For $s\geq 1$,  if $\ell(sQ_1)+1=\ell((s+1)Q_1)$, then 
$\ell (s\sum\limits_{j=1}^rQ_{j})+r=\ell((s+1)\sum\limits_{j=1}^rQ_{j}).$

\end{lemma}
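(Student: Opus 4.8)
The plan is to use that $F/K(x)$ is a Galois extension, so that $G:=Gal(F/K(x))$ acts transitively on the set $\{Q_1,\dots,Q_r\}$ of places of $F$ lying over $P$ (see, e.g., \cite[Theorem~3.7.1]{Sti}). First I would unwind the hypothesis: the equality $\ell(sQ_1)+1=\ell((s+1)Q_1)$ says there is a function $z\in\cL((s+1)Q_1)\setminus\cL(sQ_1)$, and for such a $z$ one has $(z)\ge -(s+1)Q_1$ together with $v_{Q_1}(z)=-(s+1)$; hence $(z)_\infty=(s+1)Q_1$, that is, $z$ has a pole of order exactly $s+1$ at $Q_1$ and no other pole. (One could instead route the whole argument through Theorem~\ref{ThMaharaj}, since $D:=\sum_{l=1}^r Q_l$ is $G$-invariant, but the transitivity argument below is more direct.)

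Next, for each $j\in\{1,\dots,r\}$ I would pick $\sigma_j\in G$ with $\sigma_j(Q_1)=Q_j$, taking $\sigma_1=\mathrm{id}$, and set $z_j:=\sigma_j(z)$. Since an automorphism of $F/K$ preserves valuations along the permutation of places it induces, $(z_j)_\infty=\sigma_j\big((z)_\infty\big)=(s+1)Q_j$. Writing $D:=\sum_{l=1}^r Q_l$, this gives $z_j\in\cL((s+1)D)$ with $v_{Q_j}(z_j)=-(s+1)$ and $v_{Q_l}(z_j)\ge 0$ for $l\ne j$.

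Finally I would conclude with a leading-coefficient argument. Fix a local uniformizer $t_l$ at $Q_l$ and let $\phi_l\colon\cL((s+1)D)\to K$ be the $K$-linear functional sending a function to the coefficient of $t_l^{-(s+1)}$ in its local expansion at $Q_l$; set $\phi=(\phi_1,\dots,\phi_r)\colon\cL((s+1)D)\to K^r$. By construction $w\in\ker\phi$ precisely when $v_{Q_l}(w)\ge -s$ for every $l$, i.e. $\ker\phi=\cL(sD)$, so $\phi$ induces an injection $\cL((s+1)D)/\cL(sD)\hookrightarrow K^r$; in particular $\ell((s+1)D)-\ell(sD)\le r$. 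On the other hand $\phi(z_j)$ has its $j$-th coordinate nonzero and all others zero, so $\phi(z_1),\dots,\phi(z_r)$ form a basis of $K^r$ and $\phi$ is onto. Hence $\ell((s+1)D)=\ell(sD)+r$, which is exactly the assertion. The substantive point here, rather than an obstacle, is the transitivity of the Galois action on the places above $P$: that is what allows the single "gap witness" $z$ at $Q_1$ to be transported to a pole of order $s+1$ at every $Q_j$ simultaneously; everything else is routine bookkeeping with valuations.
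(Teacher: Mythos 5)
Your proof is correct, and it shares the paper's central idea --- using transitivity of $Gal(F/K(x))$ on $\{Q_1,\dots,Q_r\}$ to transport a function with pole divisor $(s+1)Q_1$ to functions $z_j$ with pole divisor $(s+1)Q_j$ --- but it concludes differently. The paper builds the filtration $\cL_0\subseteq\cL_1\subseteq\cdots\subseteq\cL_r$ with $\cL_i=\cL\bigl((s+1)\sum_{j\le i}Q_j+s\sum_{j>i}Q_j\bigr)$ and exhibits the products $x_i=\prod_{j\le i}z_j$ as elements of $\cL_i\setminus\cL_{i-1}$, whereas you map $\cL((s+1)D)/\cL(sD)$ into $K^r$ by the leading Laurent coefficients at the $Q_l$ and show surjectivity using the $z_j$ one at a time. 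Your route buys something concrete: the paper's step $(x_i)_\infty=\sum_{j\le i}(z_j)_\infty$ tacitly assumes that $z_j$ has no zero at $Q_l$ for $l\ne j$ (a zero there would raise $v_{Q_i}(x_i)$ above $-(s+1)$ and could push $x_i$ into $\cL_{i-1}$), and nothing in the construction guarantees this; your argument only uses $v_{Q_l}(z_j)\ge 0>-(s+1)$ for $l\ne j$, which is automatic, so $\phi(z_1),\dots,\phi(z_r)$ are genuinely nonzero scalar multiples of the standard basis vectors. The only point worth stating explicitly is that $\phi_l$ depends on the chosen uniformizer $t_l$ but is $K$-linear for any fixed choice, which is all the argument needs.
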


\begin{proof}

Let $z_1\in F$ be a function such that $(z_1)_{\infty}=(s+1)Q_1$, and define $\mathcal{L}_0:=\mathcal{L}(s\sum\limits_{j=1}^rQ_{j})$ and 
 $\mathcal{L}_i:=\mathcal{L}((s+1)\sum\limits_{j=1}^iQ_{j}+s\sum\limits_{j=i+1}^rQ_{j})$, $i=1,\ldots,r$.  It is well known that $\ell_{i}-\ell_{i-1}\leq 1$, and we claim that
 $\ell_{i}-\ell_{i-1}=1$; that is, $\mathcal{L}_{i-1} \subsetneq \mathcal{L}_{i}$. In fact, since $G=Gal(F/K(x))$ acts transitively on the set $\{Q_1,\ldots,Q_r\}$, we can choose $\sigma_1,\ldots,\sigma_r \in G$  such that  $Q_i=\sigma_i(Q_1)$, and define $z_i=\sigma_i(z_1)$, for  $i=1,\ldots,r$.
  Thus $z_i \in \mathcal{L}_i \backslash \mathcal{L}_{i-1}$, which proves the claim. Therefore, since $\ell_{i}=\ell_{i-1}+1$, for $i=1,\ldots,r$,  we arrive at  $\ell_m-\ell_0=r$, as desired.

\end{proof}

\begin{theorem}\label{main}
Let $F/K(x)$ be a  Kummer extension given by 
$$y^m=\prod_{i=1}^{r}(x-\alpha_i)^{\lambda_i}, \text{ with } 0 <\lambda_i<m.$$
Fix an element $\alpha_0\in K\backslash\{\alpha_1,\ldots,\alpha_r\}$, and set $\alpha_{r+1}:=\infty$, $\lambda_0:=0$ and 
$\lambda_{r+1}:=-\sum\limits_{i=0}^k\lambda_i$.  For each $u=0,\ldots, r+1$, consider the usual place $P_u\in \mathbb{P}(K(x))$ corresponding to $\alpha_u\in K\cup \{\infty\}$, and  the  divisor $D_u=\sum\limits_{{Q_{uv}|P_u}}Q_{uv} \in \mathcal{D}_F$. 
 If $s \geq 1$ is an integer, then $\ell (sD_u)-\ell ((s-1)D_u)$
is the number of elements $j\in \{0,\ldots,(m,\lambda_u)-1\}$ such that 
\begin{equation}\label{eqmain}
\sum\limits_{i=0}^r\Big\lbrace\frac{(t_u+jm_u)\lambda_i}{m}\Big\rbrace\leq 1+\Big\lfloor\frac{s-1}{m_u}\Big\rfloor,
\end{equation}
where $m_u=\frac{m}{(m,\lambda_u)}$, and  $t_u \in \{0,\ldots,m_u-1\}$ is the unique  element  for which  $s+\frac{\lambda_u}{(m,\lambda_u)}t_u \equiv 0\mod m_u$.

\end{theorem}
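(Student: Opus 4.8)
The plan is to reduce the computation of $\ell(sD_u)$ to divisor arithmetic on $\mathbb{P}^1=\mathbb{P}(K(x))$. Since $D_u$ is the full fibre of the place $P_u$, it is fixed by $\mathrm{Gal}(F/K(x))$, so Theorem~\ref{ThMaharaj} gives $\ell(sD_u)=\sum_{t=0}^{m-1}\ell\big([sD_u+(y^t)]_{|K(x)}\big)$, and likewise for $(s-1)D_u$. On $\mathbb{P}^1$ a divisor $A$ satisfies $\ell(A)=\max\{0,\deg A+1\}$, so the whole statement reduces to understanding the single integer $g_t(s):=\deg[sD_u+(y^t)]_{|K(x)}$ as $s$ and $t$ vary.

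The second step is to write $g_t(s)$ down explicitly. Using the standard ramification picture of the Kummer extension --- over $P_u$ there are $(m,\lambda_u)$ places, each with ramification index $m_u$, and $v_Q(y)=\lambda_u/(m,\lambda_u)$ at such a $Q$, where the conventions $\lambda_0=0$ and $\lambda_{r+1}=-\deg f$ make the auxiliary point $\alpha_0$ (at which $y$ is a unit) and the point at infinity fit the same description --- one expresses $(y^t)=t(y)$ as a divisor of $F$, forms $sD_u+t(y)$, and restricts it to $K(x)$ place by place; the Galois action makes every ``$\min$'' in the definition of $D_{|K(x)}$ trivial. Adding up the coefficients and simplifying with Remark~\ref{lulu} (principally $\lceil -x\rceil=-\lfloor x\rfloor$ and $\lceil\sum x_i\rceil=\sum\lfloor x_i\rfloor+\lceil\sum\{x_i\}\rceil$), the degree collapses to
$$g_t(s)=\Big\lfloor\frac{s+\mu_u t}{m_u}\Big\rfloor-\Big\lfloor\frac{\mu_u t}{m_u}\Big\rfloor-\Big\lceil\sum_{i=1}^r\Big\{\frac{t\lambda_i}{m}\Big\}\Big\rceil,\qquad \mu_u:=\frac{\lambda_u}{(m,\lambda_u)},$$
in which the only $s$-dependence sits in the first term.

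Now I extract the difference: $\ell(sD_u)-\ell((s-1)D_u)=\sum_{t=0}^{m-1}\big(\max\{0,g_t(s)+1\}-\max\{0,g_t(s-1)+1\}\big)$. Since $(\mu_u,m_u)=1$, Lemma~\ref{lema-t} (applied with $\lambda=\mu_u$, $m=m_u$, and with $s$ replaced by $s-1$) shows that $g_t(s)-g_t(s-1)=\lfloor(s+\mu_u t)/m_u\rfloor-\lfloor(s-1+\mu_u t)/m_u\rfloor$ equals $1$ exactly for the $t$ with $t\equiv t_u\pmod{m_u}$ --- that is, for $t=t_u+jm_u$ with $j=0,\dots,(m,\lambda_u)-1$ --- and $0$ for all other $t$. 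For such a $t$ the summand is $\max\{0,g_t(s)+1\}-\max\{0,g_t(s)\}$, which is $1$ precisely when $g_t(s)\ge 0$ and $0$ otherwise; every other summand vanishes. Hence $\ell(sD_u)-\ell((s-1)D_u)$ equals the number of $j\in\{0,\dots,(m,\lambda_u)-1\}$ with $g_{t_u+jm_u}(s)\ge 0$.

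Finally one rewrites $g_{t_u+jm_u}(s)\ge 0$ as \eqref{eqmain}. For $t\equiv t_u\pmod{m_u}$ the defining property of $t_u$ gives $\mu_u t\equiv -s\pmod{m_u}$, and a short case check on whether $m_u\mid s$ shows that $\lfloor(s+\mu_u t)/m_u\rfloor-\lfloor\mu_u t/m_u\rfloor$ simplifies to $1+\lfloor(s-1)/m_u\rfloor$; so $g_{t_u+jm_u}(s)\ge 0$ becomes $\lceil\sum_{i=1}^r\{(t_u+jm_u)\lambda_i/m\}\rceil\le 1+\lfloor(s-1)/m_u\rfloor$, which --- the right side being an integer and $\lambda_0=0$ --- is exactly \eqref{eqmain}. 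I expect the main difficulty to lie in the second step: correctly assembling the restricted divisor for the three qualitatively different kinds of base place (a root of $f$, the auxiliary point $\alpha_0$, and $\infty$) and verifying that the conventions $\lambda_0=0$, $\lambda_{r+1}=-\deg f$ genuinely unify them --- together with the small but essential observation that a unit jump in the degree of the restricted divisor raises $\ell$ only when the new degree is already nonnegative, which is precisely what turns a potential equality into the inequality appearing in \eqref{eqmain}.
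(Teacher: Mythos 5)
Your proposal is correct and follows essentially the same route as the paper's proof: Maharaj's decomposition, the explicit restriction of $sD_u+(y^t)$ to $K(x)$, Lemma~\ref{lema-t} to isolate the classes $t\equiv t_u \pmod{m_u}$, and the genus-zero dimension formula to turn the unit degree jump into the inequality \eqref{eqmain}. The only (immaterial) differences are that you package the genus-zero step as $\ell(A)=\max\{0,\deg A+1\}$ and apply Lemma~\ref{lema-t} to the pair $(s-1,s)$ where the paper works with $(s,s+1)$.
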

\begin{proof} Since $Gal(F/K(x))$ fixes  the divisor $D_u$,  it follows from Theorem \ref{ThMaharaj} that 
$$\mathcal{L}(sD_u)= \bigoplus_{t=0}^{m-1} \mathcal{L}([sD_u+(y^t)]_{|K(x)})\,y^t.$$
One can easily check that  $(y)= \sum\limits_{i=0}^{r+1} \sum\limits_{\\{Q_{iv}|P_i}}\frac{\lambda_i}{(m, \lambda_i)}\,Q_{iv}$ is divisor of the function $y$.  Therefore
$$sD_u+(y^t)= \Big(\frac{(m,\lambda_u)s+t\lambda_u}{(m,\lambda_u)}\Big)\,D_u +\sum_{\substack{{i=0}\\{i\neq u}}}^{r+1} \sum\limits_{\\{Q_{iv}|P_i}}\frac{t\lambda_i}{(m, \lambda_i)}\,Q_{iv},$$
and restricting this divisor to $K(x)$, we obtain 
\begin{eqnarray*}
[sD_u+(y^t)]_{|K(x)}&=& \Big\lfloor \frac{(m,\lambda_u)s+t\lambda_u}{m} \Big\rfloor P_u + \sum_{\substack{{i=0}\\{i\neq u}}}^{r+1} \Big\lfloor \frac{t\lambda_i}{m} \Big\rfloor P_i  \\
& = &\Big\lfloor \frac{s+t\eta_u}{m_u} \Big\rfloor P_u +
 \sum_{\substack{{i=0}\\{i\neq u}}}^{r+1} \Big\lfloor \frac{t\lambda_i}{m} \Big\rfloor P_i  
\end{eqnarray*}
where $\eta_u:=\frac{\lambda_u}{(m,\lambda_u)}$ and $m_u:=\frac{m}{(m,\lambda_u)}$. Now,  Theorem \ref{ThMaharaj}  implies 
$$
\begin{aligned}
&\ell((s+1)D_u)-\ell(sD_u)=
&\\
&\sum_{t=0}^{m-1} \left[ \ell \left(\Big\lfloor \frac{s+1+t\eta_u}{m_u} \Big\rfloor P_u + D_t\right) - \ell \left(\Big\lfloor \frac{s+t\eta_u}{m_u} \Big\rfloor P_u  + D_t \right) \right],
\end{aligned}
$$

\noindent where $D_{t} =\sum\limits_{\substack{{i=0}\\{i\neq u}}}^{r+1} \Big\lfloor \frac{t\lambda_i}{m} \Big\rfloor P_i$. Note that $(\eta_u,m_u)=1$, and thus using  the unique $t_u\in \{0,\ldots,m_u-1\}$ given by Lemma \ref{lema-t} we obtain

$$\ell((s+1)D_u)-\ell(sD_u)=\sum_{j=0}^{(m,\lambda_u)-1}\ell_j, \text{ where }$$

\begin{eqnarray*}
\ell_j &=& \ell \left(\Big\lfloor\frac{s+1+(t_u+jm_u)\eta_u}{m_u}\Big\rfloor P_u + D_{t_u+jm_u}\right)\\
&-& \ell \left(\Big\lfloor\frac{s+(t_u+jm_u)\eta_u}{m_u}\Big\rfloor P_u + D_{t_u+jm_u} \right)\\
& = & \ell \left(\left(1+\Big\lfloor\frac{s}{m_u}\Big\rfloor+\Big\lfloor\frac{(t_u+jm_u)\eta_u}{m_u}\Big\rfloor\right) P_u + D_{t_u+jm_u}\right) \\
&-&\ell \left(\left(\Big\lfloor\frac{s}{m_u}\Big\rfloor+\Big\lfloor\frac{(t_u+jm_u)\eta_u}{m_u}\Big\rfloor\right) P_u + D_{t_u+jm_u} \right).
\end{eqnarray*}

\noindent Note that the  last equality implies $\ell_j \in\{0,1\}$. Also, since $K(x)$ has genus zero, Riemann's Theorem implies that $\ell_j =0$ if and only if 
$$\deg\left((1+\Big\lfloor\frac{s}{m_u}\Big\rfloor+\Big\lfloor\frac{(t_u+jm_u)\eta_u}{m_u}\Big\rfloor) P_u + D_{t_u+jm_u}\right)<0.$$ However
$$\deg\left((1+\Big\lfloor\frac{s}{m_u}\Big\rfloor+\Big\lfloor\frac{(t_u+jm_u)\eta_u}{m_u}\Big\rfloor) P_u + D_{t_u+jm_u}\right)=$$
\begin{eqnarray*}
&=& 1+\Big\lfloor\frac{s}{m_u}\Big\rfloor+\Big\lfloor\frac{(t_u+jm_u)\lambda_u}{m}\Big\rfloor + \sum_{\substack{{i=0}\\{i\neq u}}}^{r+1} \Big\lfloor \frac{(t_u+jm_u)\lambda_i}{m} \Big\rfloor\\
&=& 1+\Big\lfloor\frac{s}{m_u}\Big\rfloor +\sum\limits_{i=0}^{r+1} \Big\lfloor \frac{(t_u+jm_u)\lambda_i}{m} \Big\rfloor\\
&=&1+\Big\lfloor\frac{s}{m_u}\Big\rfloor +\sum\limits_{i=0}^{r} \Big\lfloor \frac{(t_u+jm_u)\lambda_i}{m} \Big\rfloor +\Big\lfloor -\sum\limits_{i=0}^{r} \frac{(t_u+jm_u)\lambda_i}{m} \Big\rfloor\\
&=&1+\Big\lfloor\frac{s}{m_u}\Big\rfloor +\sum\limits_{i=0}^{r} \Big\lfloor \frac{(t_u+jm_u)\lambda_i}{m} \Big\rfloor -\Big\lceil \sum\limits_{i=0}^{r} \frac{(t_u+jm_u)\lambda_i}{m} \Big\rceil\\
&=&1+\Big\lfloor\frac{s}{m_u}\Big\rfloor -\Big\lceil\sum\limits_{i=0}^{r} \Big\lbrace \frac{(t_u+jm_u)\lambda_i}{m} \Big\rbrace\Big\rceil.
\end{eqnarray*}
Notice that the last two equalities are due to Remark \ref{lulu}. Therefore $\ell_j=0$  if and only if $1+\Big\lfloor\frac{s}{m_u}\Big\rfloor <\sum\limits_{i=0}^{r} \Big\lbrace \frac{(t_u+jm_u)\lambda_i}{m} \Big\rbrace$, and this completes the proof.
\end{proof}

As an immediate consequence of  Theorem \ref{main} we obtain the next result.

\begin{cor}\label{ponto}
Using the same notation as in Theorem \ref{main}, we have that  if $D_u$ is a totally ramified place in the extension $F/K(x)$, then $s$ is a gap number at $D_u$ if and only if 
$$\sum\limits_{i=0}^r\Big\lbrace\frac{t_u\lambda_i}{m}\Big\rbrace>1+\Big\lfloor\frac{s-1}{m}\Big\rfloor. $$ 
\end{cor}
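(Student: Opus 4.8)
The plan is to specialize Theorem \ref{main} to the case where $D_u$ is totally ramified. First I would observe that ``$D_u$ is a totally ramified place'' means exactly that there is a single place $Q$ of $F$ lying over $P_u$, so $D_u = Q$ is itself a place (in particular $\ell(sD_u)$ is the ordinary $\ell$-function of the place $Q$), and the ramification index is $m$; since $e(Q|P_u) = m/(m,\lambda_u)$ in a Kummer extension, total ramification forces $(m,\lambda_u) = 1$. Hence $m_u = m/(m,\lambda_u) = m$ and the index set $\{0,\ldots,(m,\lambda_u)-1\}$ in Theorem \ref{main} collapses to the single value $j = 0$.

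Next I would apply the Weierstrass Gap Theorem (quoted in Section 2) at the degree-one place $Q = D_u$: the integer $s \geq 1$ is a gap at $D_u$ if and only if $\ell((s-1)D_u) = \ell(sD_u)$, i.e.\ if and only if $\ell(sD_u) - \ell((s-1)D_u) = 0$. By Theorem \ref{main}, this difference counts the number of $j \in \{0,\ldots,(m,\lambda_u)-1\}$ satisfying \eqref{eqmain}; with $(m,\lambda_u) = 1$ the only candidate is $j = 0$, so the difference is $1$ precisely when
$$
\sum_{i=0}^r \Big\lbrace \frac{t_u \lambda_i}{m} \Big\rbrace \leq 1 + \Big\lfloor \frac{s-1}{m} \Big\rfloor
$$
holds (recall $m_u = m$ and $jm_u = 0$ here), and $0$ otherwise. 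Therefore $s$ is a gap at $D_u$ if and only if the reverse strict inequality
$$
\sum_{i=0}^r \Big\lbrace \frac{t_u \lambda_i}{m} \Big\rbrace > 1 + \Big\lfloor \frac{s-1}{m} \Big\rfloor
$$
holds, which is exactly the claimed statement. One should also note that with $(m,\lambda_u)=1$ the element $t_u$ described in Theorem \ref{main} is simply the unique $t_u \in \{0,\ldots,m-1\}$ with $s + \lambda_u t_u \equiv 0 \bmod m$, consistent with the notation in the corollary.

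There is no real obstacle here; the content is entirely in Theorem \ref{main}, and the corollary is a direct translation via two standard facts (the Kummer ramification formula $e(Q|P_u) = m/(m,\lambda_u)$ and the Gap Theorem). The only point requiring a line of care is justifying that total ramification of $D_u$ forces $(m,\lambda_u) = 1$ — equivalently, that $D_u$ consisting of a single place makes $m_u = m$ — so that the sum in \eqref{eqmain} has just one term; everything else is substitution of $j = 0$ and $m_u = m$ into the inequality and negating it.
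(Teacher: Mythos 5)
Your proposal is correct and follows essentially the same route as the paper: the authors likewise reduce to $m_u=m$ (so that the index set in Theorem \ref{main} is trivial, or equivalently the fractional parts $\bigl\lbrace (t_u+jm_u)\lambda_i/m\bigr\rbrace$ all equal $\bigl\lbrace t_u\lambda_i/m\bigr\rbrace$) and then negate the counting condition via the Gap Theorem. Your extra remarks on why total ramification forces $(m,\lambda_u)=1$ and on the form of $t_u$ are accurate elaborations of what the paper leaves implicit.
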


\begin{proof}
It follows immediately from the fact that $m_u=m$, and then  $\Big\lbrace \frac{(t_u+jm_u)\lambda_i}{m} \Big\rbrace= \Big\lbrace \frac{t_u\lambda_i}{m} \Big\rbrace$.
\end{proof}

Note that Lemma \ref{dimension} together with (\ref{eqmain}) in Theorem \ref{main} tell us that sufficiently small values of $s$ are
gap numbers. We point out  the effect of that on the generic places of $F$; that is, the  places  of  $F$ lying above a place $P_0 \in \mathbb{P}(K(x))$.

\begin{cor}\label{generic} Let  $Q_0$ be  a generic place in $F/K(x)$, and $s\geq 1$ an integer.  If there exists $j \in \{0,1,\ldots,m-1\}$ such that 
$$s <\sum\limits_{i=0}^{r} \Big\lbrace \frac{j \lambda_i}{m} \Big\rbrace,$$
then $s$ is a gap at $Q_0$. In particular,  
\begin{enumerate}
\item[(i)]  If  $s <r/2$ then $s$ is a gap at $Q_0$.
\item[(ii)] If  $\lambda_1=\cdots=\lambda_r$ is coprime to $m$,  and $s<r(m-1)/m$  then  $s$ is a gap at $Q_0$.
\end{enumerate}
\end{cor}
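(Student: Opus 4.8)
The plan is to deduce Corollary~\ref{generic} directly from Theorem~\ref{main} applied at the place $P_0$, which by construction is unramified in $F/K(x)$. First I would observe that since $\alpha_0 \notin \{\alpha_1,\dots,\alpha_r\}$, we have $\lambda_0 = 0$, hence $(m,\lambda_0) = m$, so $m_0 = m/(m,\lambda_0) = 1$ and the index $j$ in Theorem~\ref{main} ranges over $\{0,\dots,(m,\lambda_0)-1\} = \{0,\dots,m-1\}$. Moreover $\eta_0 = \lambda_0/(m,\lambda_0) = 0$, so the congruence defining $t_0$ forces $t_0 = 0$ (the unique element of $\{0,\dots,m_0-1\} = \{0\}$), and $t_0 + j m_0 = j$. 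Thus the count $\ell(sD_0) - \ell((s-1)D_0)$ equals the number of $j \in \{0,\dots,m-1\}$ with $\sum_{i=0}^{r}\{\,j\lambda_i/m\,\} \le 1 + \lfloor (s-1)/1\rfloor = s$. Consequently, if there exists some $j$ with $s < \sum_{i=0}^{r}\{\,j\lambda_i/m\,\}$, that $j$ fails the inequality, so the count of valid $j$ at level $s$ is strictly smaller than it would be were the inequality $\le s$ to hold for all $j$ — but I must be a bit more careful: I actually want to conclude $\ell(sD_0) = \ell((s-1)D_0)$, i.e.\ that \emph{no} $j$ satisfies \eqref{eqmain} with this $s$.

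So the cleaner route is: Since $j = 0$ gives $\sum_i \{0\} = 0 \le s$ always, the count is never zero, and the naive argument does not immediately give a gap. Instead I would recall that $D_0 = \sum_{j=1}^{m} Q_{0j}$ is the sum of the $m$ places over $P_0$, and use Lemma~\ref{dimension} in the contrapositive together with the observation that $s$ is a gap at a single generic place $Q_0$ iff $\ell((s-1)Q_0) = \ell(sQ_0)$. The key link is: if $s$ were a non-gap at $Q_0$, then $\ell(sQ_0) = \ell((s-1)Q_0)+1$, and Lemma~\ref{dimension} (applied with the roles of $s-1,s$) would give $\ell(sD_0) - \ell((s-1)D_0) = m$, i.e.\ \emph{every} $j \in \{0,\dots,m-1\}$ satisfies \eqref{eqmain}. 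So to show $s$ is a gap at $Q_0$ it suffices to exhibit one $j$ for which \eqref{eqmain} fails, namely one $j$ with $\sum_{i=0}^r\{\,j\lambda_i/m\,\} > 1 + \lfloor(s-1)/1\rfloor = 1+(s-1) = s$, which is exactly the hypothesis $s < \sum_{i=0}^r \{\,j\lambda_i/m\,\}$ (noting $s < \sum$ with $s$ an integer and the sum possibly non-integral means $\sum \ge s + $ something, and since we need strictly $> s$, the strict inequality $s < \sum$ suffices precisely because... I should state it as: the hypothesis gives $\sum > s \ge 1 + \lfloor (s-1)/1 \rfloor$ — wait, $1 + \lfloor(s-1)/1\rfloor = 1 + s - 1 = s$, so $\sum > s$ is exactly failure of \eqref{eqmain}). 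That closes the general statement.

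For part~(i), I would invoke Lemma~\ref{inteiro}(i): it produces $t \in \{1,\dots,m-1\}$ with $\sum_{i=1}^r \{\,t\lambda_i/m\,\} \ge r/2$, and since $\lambda_0 = 0$ contributes $0$, we get $\sum_{i=0}^r\{\,t\lambda_i/m\,\} \ge r/2 > s$ whenever $s < r/2$; take $j = t$ in the general statement. For part~(ii), I would similarly invoke Lemma~\ref{inteiro}(ii), which under $\lambda_1 = \cdots = \lambda_r$ coprime to $m$ yields $t$ with $\sum_{i=1}^r\{\,t\lambda_i/m\,\} = r(m-1)/m$; again adding the zero contribution from $i=0$ and comparing with $s < r(m-1)/m$ gives the desired $j = t$.

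The main obstacle — and the place I would be most careful in writing — is the logical direction: Theorem~\ref{main} computes $\ell(sD_0) - \ell((s-1)D_0)$ as a count, but a gap at the single place $Q_0$ is a statement about $\ell(sQ_0) - \ell((s-1)Q_0)$, and these are genuinely different objects. The bridge is Lemma~\ref{dimension}, which only goes one way (non-gap at $Q_1$ $\Rightarrow$ full jump at $D_0$), so I must phrase the argument contrapositively: assume $s$ is a non-gap at $Q_0$, derive that the Theorem~\ref{main} count is exactly $m$, hence every $j$ satisfies \eqref{eqmain}, contradicting the exhibited $j$. I should also double-check the arithmetic $1 + \lfloor (s-1)/m_0 \rfloor = 1 + (s-1) = s$ since $m_0 = 1$, and that $t_0 = 0$ with $t_0 + jm_0 = j$, so that the relevant inequality is precisely $\sum_{i=0}^r\{\,j\lambda_i/m\,\} \le s$; everything else is a routine substitution into the two parts of Lemma~\ref{inteiro}.
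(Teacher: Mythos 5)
Your proposal is correct and follows essentially the same route as the paper: specialize Theorem \ref{main} at the unramified place $P_0$ (so $m_0=1$, $t_0=0$, and the inequality becomes $\sum_{i=0}^r\{j\lambda_i/m\}\le s$), bridge from $D_0$ to the single place $Q_0$ via the contrapositive of Lemma \ref{dimension}, and invoke Lemma \ref{inteiro} for parts (i) and (ii). The paper's own proof is a one-liner that leaves the Lemma \ref{dimension} bridge implicit (it is flagged only in the remark preceding the corollary), so your more careful handling of that logical direction is exactly the right thing to have worried about.
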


\begin{proof}
The main assertion is a direct application of Theorem \ref{main}, when $m_u=1$ and $\lambda_u=t_u=0$.
The additional assertions follow from Lemma \ref{inteiro} 
 and Lemma \ref{dimension}.
\end{proof}

As previously mentioned, the characterization given by Theorem  \ref{main} 
can be used to extend  independent results by several authors. 
The next section presents an application  in the setting of maximal curves.

\section{Maximal Curves over $\mathbb{F}_{q^2}$}\label{maximal}

A projective, geometrically irreducible, nonsingular algebraic curve  defined over $\F_{q^2}$ of genus $g$ is called $\mathbb{F}_{q^2}$-maximal 
if its  number of $\mathbb{F}_{q^2}$-rational points attains the Hasse-Weil upper bound
$$ 1+q^2  + 2gq.$$
The most well know example  of $\mathbb{F}_{q^2}$-maximal curve is the Hermitian curve $$x^{q+1}+y^{q+1}=1.$$
Curves with many rational points, in particular, maximal curves, have very important applications in Coding Theory  \cite{Sti}, and several other areas, such as, finite geometry \cite{Hirsch}, correlation of shift register sequences \cite{Lidl}  and exponential sums \cite{Moreno}. Maximal curves have been extensively  studied by many authors, not only because of its applications, but also because of  the attractive mathematical challenges represented by the related problems.

To address one of these problems, we recall that a well known  result of Serre (presented by Lachaud in \cite[Proposition 6]{La}), says that any curve which is $\F_{q^2}$-covered by an  $\F_{q^2}$-maximal curve is $\F_{q^2}$-maximal as well. In particular, if $m$ and $n$ are divisors of q+1, then the curve
                        $$x^n+y^m=1,$$
is $\F_{q^2}$-maximal. A natural question is whether or not the converse
of this statement holds true. More generally, one may ask the following:

\text{}\\
{ \bf Question 1.}  For an $\F_{q^2}$-maximal curve of type $y^m=f(x)$, which conditions on the polynomial $f(x) \in \F_{q^2}[x]$ will assure that $m$ divides $q+1$?

\text{}\\

Substantial progress towards an answer of Question 1 has been made by Garcia, Tafazolian and Torres in a sequence of papers \cite{GT2, GT1,T4,TT}. In particular, they proved that $m$ must divide $q+1$ in the following cases.

\begin{enumerate}[\rm 1.]
\item $f(x)=1-x^n$  (\cite[Theorem 4.4]{GT2},\cite[Theorem 5]{TT}).
\item $f(x)=x^2-1$   \cite[Theorem 1]{T4}.
\item $f(x) \in \mathbb{F}_{q^2}[x]$ is an additive and separable polynomial  (\cite[Theorem 2]{GT1},\cite[Theorem 9]{TT}).
\end{enumerate}


The aim of this section is to  answer Question 1 for  a wide class of polynomials $f(x) \in \F_{q^2}[x]$. In this way, we will extend and incorporate previous results listed above.    Our main ingredient will be Theorem \ref{main}. However, part of the approach is based on the same strategy used in \cite{T4,TT}. That is, we make use of the following well known results (see \cite[Lemma 1]{RH} and \cite[Lemma 3]{TT}).

\begin{lemma}\label{basico}
 Let $\mathcal{X}$ be a maximal curve over $\Fq2$ , and let $P$ and $Q$ be two distinct rational points on ${X}$. Then
\begin{enumerate}[\rm(i)]
 \item $(q+1)P\sim (q+1)Q$.
 \item If there exists an integer $m\geq 1$ such that $mP\sim mQ$, then 
 $$d:=\gcd(m,q+1)$$
 is a nongap at $P$.
  \end{enumerate}
\end{lemma}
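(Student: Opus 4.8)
The plan is to derive part (i) from the standard description of $\Fq2$-maximal curves through the Frobenius endomorphism, and then to obtain part (ii) from (i) by a short Bézout argument at the level of divisor classes.

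\textbf{Part (i).} The one nontrivial ingredient is the well-known fact that a smooth projective curve $\mathcal{X}/\Fq2$ of genus $g$ is $\Fq2$-maximal if and only if its $L$-polynomial over $\Fq2$ is $(1+qT)^{2g}$; equivalently, the Frobenius endomorphism $\phi$ relative to $\Fq2$ acts as multiplication by $-q$ on the group of degree-zero divisor classes of $\mathcal{X}$. I would state this precisely with a reference and take it as given. Now observe that, since $P$ and $Q$ are $\Fq2$-rational, $\phi$ fixes both of them, hence fixes the class of $P-Q$; on the other hand $\phi$ sends that class to $-q(P-Q)$. Comparing, $(P-Q)\sim -q(P-Q)$, that is, $(q+1)P\sim (q+1)Q$.

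\textbf{Part (ii).} Suppose $mP\sim mQ$ for some integer $m\geq 1$; we may assume $P\neq Q$, since otherwise the claim is vacuous. Then the class of $P-Q$ is killed by $m$, and, by part (i), also by $q+1$; writing $d=\gcd(m,q+1)=am+b(q+1)$ with $a,b\in\Z$ and combining the two linear equivalences $mP\sim mQ$ and $(q+1)P\sim(q+1)Q$, we get $dP\sim dQ$. Hence there is a nonconstant function $z\in F$ with $(z)=dQ-dP$, so that $(z)_\infty=dP$; therefore $d\in H(P)$, i.e.\ $d$ is a nongap at $P$.

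\textbf{Expected main obstacle.} There is essentially no obstacle beyond invoking the Frobenius characterization of maximality in part (i): once it is granted, both items reduce to one-line manipulations of divisor classes. Accordingly, the only point deserving care is to cite and state that characterization correctly (via the $L$-polynomial of a maximal curve and the induced relation on the Jacobian), after which the arguments above complete the proof.
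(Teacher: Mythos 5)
Your proof is correct. The paper itself offers no proof of this lemma --- it is quoted as a known result from \cite[Lemma 1]{RH} and \cite[Lemma 3]{TT} --- and your derivation is exactly the standard argument from those sources: part (i) follows from the $L$-polynomial $(1+qT)^{2g}$ forcing the Frobenius to act as multiplication by $-q$ on degree-zero divisor classes (so the class of $P-Q$ is killed by $q+1$), and part (ii) is the B\'ezout manipulation $d=am+b(q+1)$ applied to the two linear equivalences, yielding a function with pole divisor exactly $dP$.
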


The following result  uses the same  notation as in Theorem \ref{main}.
\begin{lemma}\label{gap} Suppose $P$ and $Q$ are totally ramified over two distinct roots of $f(x)$ with the same multiplicity, then  $1,2,\ldots, \lfloor m/2\rfloor-1$ are gaps at $P$. Moreover, if  either $m$ or the multiplicity of a third root of $f(x)$ is an odd number, then $1,2,\ldots, \lfloor m/2\rfloor$ are gaps at $P$. 
\end{lemma}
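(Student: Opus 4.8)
The plan is to apply Corollary \ref{ponto} to the totally ramified place $P$ and show that for small $s$ the gap inequality
\[
\sum_{i=0}^{r}\Big\lbrace\frac{t_u\lambda_i}{m}\Big\rbrace>1+\Big\lfloor\frac{s-1}{m}\Big\rfloor
\]
holds. Since we only look at $s\le\lfloor m/2\rfloor\le m-1$, the right-hand side is exactly $1$, so it suffices to prove that the relevant $t:=t_u$ satisfies $\sum_{i=0}^{r}\{t\lambda_i/m\}>1$. I would set up the notation so that $\alpha_0$ is chosen to be the second root $Q$ is totally ramified over (one is free to pick any $\alpha_0\in K\setminus\{\alpha_1,\dots,\alpha_r\}$, but here it is cleaner to relabel: let $\alpha_1,\alpha_2$ be the two roots under $P$ and $Q$, with exponents $\lambda_1,\lambda_2$, and recall $P=P_1$ is totally ramified means $(m,\lambda_1)=1$, so $m_1=m$). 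For $s$ in the claimed range, $t=t_1\in\{0,\dots,m-1\}$ is the unique solution of $s+\lambda_1 t\equiv 0\pmod m$; in particular $t\neq 0$ since $0<s<m$.

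The key step is a lower bound on $\sum_{i=0}^{r}\{t\lambda_i/m\}$ using the two distinguished roots. First I would observe $\{t\lambda_1/m\}=\{-s/m\}=(m-s)/m$ because $t\lambda_1\equiv -s\pmod m$ and $0<s<m$; since $s\le\lfloor m/2\rfloor$ this term is $\ge 1/2$, in fact $>1/2$ when $s<m/2$. Next, I want a comparable contribution from the second root $Q=P_2$. The hypothesis that $P$ and $Q$ are \emph{both} totally ramified and \emph{rational} over distinct roots of $f(x)$, on a maximal curve, is what forces the link: by Lemma \ref{basico}(i), $(q+1)P\sim(q+1)Q$, and one should translate the congruence defining $t$ at $P$ versus at $Q$. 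Concretely, using the divisor of $y$ (as computed in the proof of Theorem \ref{main}) one has $\lambda_1 Q_{1}\sim \lambda_2 Q_{2}$ modulo pullbacks, and comparing the two totally ramified places shows that the value of $t$ ``seen'' from $P$ is $m-t'$ where $t'$ is the value seen from $Q$; the upshot is that $\{t\lambda_2/m\}$ is also of the form $(m-s')/m$ with $0<s'<m$, hence $\ge$ roughly $1/2$. Adding the two contributions gives $\sum_{i=0}^r\{t\lambda_i/m\}\ge \{t\lambda_1/m\}+\{t\lambda_2/m\}> 1$ in the range $s<m/2$, i.e. $s\le\lfloor m/2\rfloor-1$, which yields the first assertion. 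For the refinement: if $m$ is odd, then $s=\lfloor m/2\rfloor=(m-1)/2$ already gives $\{t\lambda_1/m\}=(m-s)/m=(m+1)/(2m)>1/2$ strictly, and combined with the $\ge (m-1)/(2m)$ from the second root the sum still exceeds $1$. If instead a third root $\alpha_3$ has odd multiplicity $\lambda_3$, then at $s=m/2$ (so $m$ even) one checks $t\lambda_3$ is odd times $t$... more carefully, $t$ is odd (since $\lambda_1 t\equiv -m/2\pmod m$ forces $t\equiv m/2\cdot\lambda_1^{-1}$, which one analyzes mod $2$), so $t\lambda_3\not\equiv 0\pmod m$ and $\{t\lambda_3/m\}\ge 1/m>0$, pushing the sum strictly above $1$ and making $s=\lfloor m/2\rfloor$ a gap as well.

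The main obstacle I anticipate is making precise the relation between the parameter $t_P$ computed from $P$ and $t_Q$ computed from $Q$ — i.e. extracting from the maximality hypothesis (via $(q+1)P\sim(q+1)Q$ and Lemma \ref{basico}) the exact statement that the second fractional part is again bounded below by about $1/2$, rather than possibly being small. I would handle this by working with the principal divisor of $y$ restricted appropriately: $\lambda_i/(m,\lambda_i)=\lambda_i$ at each totally ramified root, so the class of $\lambda_1 P-\lambda_2 Q$ (plus a $K(x)$-rational correction supported away from $P,Q$) is controlled, and then the defining congruence $s+\lambda_1 t\equiv 0 \pmod m$ at $P$ transports to $s+\lambda_2 t'\equiv 0\pmod m$ at $Q$ with $t'\equiv -t$, giving $\{t\lambda_2/m\}=1-\{t'\lambda_2/m\}=1-(m-s')/m=s'/m$ — so one must instead argue directly that \emph{either} $\{t\lambda_2/m\}$ \emph{or} the analogous sum is large, exactly as in Lemma \ref{inteiro}, by replacing $t$ with $m-t$ if necessary and noting the gap condition is symmetric in this replacement. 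A secondary, purely arithmetic obstacle is the parity bookkeeping in the refinement (showing $t$ is odd, and that odd multiplicity of a third root forces a strictly positive extra fractional part at $s=m/2$); this is routine once the congruences are written out but needs care with the cases $m$ even versus odd.
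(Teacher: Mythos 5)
Your framework is the right one and matches the paper's: reduce via Corollary \ref{ponto} to showing $\sum_{i=0}^{r}\lbrace t\lambda_i/m\rbrace>1$ for $1\le s\le \lfloor m/2\rfloor$, and note that the root under $P$ contributes $\lbrace t\lambda_1/m\rbrace=(m-s)/m$. The genuine gap is exactly the step you flag as your ``main obstacle'': you never establish that the second root contributes roughly $1/2$, and neither of your two proposed repairs can work. Maximality is a red herring here --- the lemma is a statement about the Kummer extension alone, Lemma \ref{basico} plays no role in its proof (it is used only afterwards, in Theorem \ref{main2}), and the relation $(q+1)P\sim(q+1)Q$ imposes no congruence between $t$ and $\lambda_2$; moreover, even your intermediate claim that $\lbrace t\lambda_2/m\rbrace$ ``is of the form $(m-s')/m$ with $0<s'<m$'' only says this fractional part is nonzero, which is far from $\ge 1/2$. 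The symmetrization of Lemma \ref{inteiro} is also unavailable: $t$ is pinned down by $s$ via $s+\lambda_1t\equiv 0\pmod m$, and replacing $t$ by $m-t$ amounts to testing $m-s$ instead of $s$, so you do not get to choose the better of $t$ and $m-t$.

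For comparison, the paper's own proof disposes of this point by simply writing $\lbrace t\lambda_2/m\rbrace=(m-s)/m$ with no justification. That identity is equivalent to $t(\lambda_2-\lambda_1)\equiv 0\pmod m$, which holds for every relevant $s$ only when $\lambda_1=\lambda_2$ (the case of all the applications in Section \ref{maximal}, where the two distinguished roots are simple). In general it fails, and so does the lemma as stated: for $y^{10}=(x-\alpha_1)(x-\alpha_2)^3$ both $P_1$ and $P_2$ are totally ramified, yet $z=y^{7}/\big((x-\alpha_1)(x-\alpha_2)^{2}\big)$ has $(z)_\infty=3P_1$, so $s=3<\lfloor 10/2\rfloor$ is a non-gap at $P_1$ (equivalently, $\lbrace 7\cdot 1/10\rbrace+\lbrace 7\cdot 3/10\rbrace=7/10+1/10\not>1$). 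So your instinct that a real argument is missing at this step is sound; what is actually missing is the hypothesis $\lambda_1\equiv\lambda_2\pmod m$, under which your computation for the first root applies verbatim to the second and the rest of your outline goes through --- including the parity analysis at $s=m/2$, where the correct observation is $t\lambda_3\equiv(m/2)\lambda_1^{-1}\lambda_3\equiv m/2\pmod m$ for odd $\lambda_3$, rather than ``$t$ is odd'' (false already for $m=4$, $\lambda_1=1$, $s=2$, where $t=2$).
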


\begin{proof}
Without loss of generality, assume $\gcd(\lambda_1,m)=\gcd(\lambda_2,m)=1$.  If  $s<m/2$, then 
$$\sum\limits_{i=1}^r\Big\lbrace\frac{t_u\lambda_i}{m}\Big\rbrace=\frac{m-s}{m}+\frac{m-s}{m}+\sum\limits_{i=3}^r\Big\lbrace\frac{t_u\lambda_i}{m}\Big\rbrace>1=1+\Big\lfloor\frac{s-1}{m}\Big\rfloor,$$
 and then  Theorem \ref{main} implies that $s$ is a gap at $P$.

If  $m$ is odd, note that $1,2,\ldots, \lfloor m/2\rfloor$ are all smaller then $m/2$, and the result follows from the discussion above.  If $m$ is even, then for   $s= m/2$, we have 
 $$\sum\limits_{i=1}^r\Big\lbrace\frac{t_u\lambda_i}{m}\Big\rbrace=\frac{2(m-s)}{m}+\sum\limits_{i=3}^{r}\Big\lbrace  \frac{(m-s)\lambda_i}{m} \Big\rbrace >1 \text{ if and only if } \sum\limits_{i=3}^{r}\Big\lbrace  \frac{\lambda_i}{2} \Big\rbrace >0.$$ 
 Clearly the latter condition is equivalent to  $\lambda_i$ being  odd for some $i\geq 3$.
\end{proof} 
 
\begin{theorem}\label{main2}
Notation as in Theorem \ref{main}.  If $F/K(x)$ is $\Fq2$-maximal, and $P$ and $Q$ are rational points totally ramified over two distinct roots of $f(x)$  with the same multiplicity, then the following holds:
\begin{enumerate}[\rm(i)]
 \item $m$ divides $2(q+1)$, and 
 \item if  either $m$ or the multiplicity of a third root of $f(x)$ is an odd number, 
  \end{enumerate}
then $m$ divides $q+1$. 
\end{theorem}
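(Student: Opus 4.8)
The plan is to turn the Kummer structure together with $\Fq2$-maximality into a divisor-class relation at $P$, extract from it a small non-gap, and then play that non-gap against the gaps produced by Lemma~\ref{gap}, which will force $m\mid 2(q+1)$ (resp.\ $m\mid q+1$).

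\emph{Step 1: $mP\sim mQ$.} Let $P$ lie over the root $\alpha_1$ of $f$ and $Q$ over the root $\alpha_2$, with $\alpha_1\neq\alpha_2$. Total ramification at these places forces $\gcd(\lambda_1,m)=\gcd(\lambda_2,m)=1$, so $P$ is the unique place of $F$ above $\alpha_1$ with $v_P(x-\alpha_1)=m$, and likewise $Q$ is the unique place above $\alpha_2$ with $v_Q(x-\alpha_2)=m$. Every remaining linear factor of $f$ — in particular $x-\alpha_2$ — is a unit at $P$, and symmetrically at $Q$, so the function $z=(x-\alpha_1)/(x-\alpha_2)\in F$ has divisor exactly $mP-mQ$. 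Hence $mP\sim mQ$.

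\emph{Step 2: a small non-gap at $P$.} Since $F/K(x)$ is $\Fq2$-maximal and $P,Q$ are rational, Lemma~\ref{basico}(i) gives $(q+1)P\sim(q+1)Q$; feeding the relation $mP\sim mQ$ of Step~1 into Lemma~\ref{basico}(ii) shows that $d:=\gcd(m,q+1)$ is a non-gap at $P$.

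\emph{Step 3: contradicting Lemma~\ref{gap}.} By Lemma~\ref{gap}, $1,2,\dots,\lfloor m/2\rfloor-1$ are gaps at $P$, so the non-gap $d$ must satisfy $d\geq\lfloor m/2\rfloor$. A divisor of $m$ that is at least $\lfloor m/2\rfloor$ equals $m$, or equals $m/2$ when $m$ is even; in the first case $m\mid q+1$ and in the second $m/2\mid q+1$, so in all cases $m\mid 2(q+1)$, which is (i). (The only $m$ for which the listed set of gaps is empty are $m=2$, where $m\mid 2(q+1)$ is immediate, and $m=3$, which is odd and hence is covered by the refinement that follows.) For (ii), the extra hypothesis — $m$ odd, or a third root of $f$ of odd multiplicity — lets us invoke the second assertion of Lemma~\ref{gap}: now $1,2,\dots,\lfloor m/2\rfloor$ are all gaps at $P$, so $d\geq\lfloor m/2\rfloor+1>m/2$; a divisor of $m$ exceeding $m/2$ must equal $m$, whence $m=d\mid q+1$.

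The one genuinely delicate point is Step~1 — verifying that $z=(x-\alpha_1)/(x-\alpha_2)$ has no zeros or poles on $F$ other than $P$ and $Q$ and that its order there is exactly $\pm m$ — which is precisely where total ramification enters; granting that, the whole statement reduces to the elementary remark that a proper divisor of $m$ cannot exceed $m/2$, and everything else is a direct application of Lemmas~\ref{basico} and~\ref{gap}.
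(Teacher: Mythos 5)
Your proof is correct and follows essentially the same route as the paper: establish $mP\sim mQ$, apply Lemma~\ref{basico} to get that $d=\gcd(m,q+1)$ is a non-gap at $P$, and play this against the gaps from Lemma~\ref{gap} to force $d\in\{m/2,m\}$ (resp.\ $d=m$). You additionally supply details the paper leaves implicit — the explicit function $(x-\alpha_1)/(x-\alpha_2)$ realizing $mP\sim mQ$, and the edge cases $m\in\{2,3\}$ where the list of gaps $1,\dots,\lfloor m/2\rfloor-1$ is empty — which is a welcome tightening rather than a different argument.
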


\begin{proof}

 Since $P$ and $Q$ are totally ramified, the equation $y^m=f(x)$ gives $mP\sim mQ$. From Lemma \ref{basico}, we have that  $(q+1)P\sim (q+1)Q$  and that   $d=\gcd(m,q+1)$ is a nongap at $P$. Therefore, since  $d$ divides $m$, it follows from Lemma \ref{gap} that $d\in \{m/2,m\}$, and this proves (i). For the second assertion, note that from Lemma \ref{gap} the number $\lfloor m/2\rfloor$ is a gap at $P$.
Therefore, $d\neq m/2$, and then $m=d$ is a divisor of $q+1$.
\end{proof} 
 
\begin{remark} Note that most of the  polynomials $f(x)$  found in \cite{GT1,T4,TT} (listed above)  are cases covered by Theorem \ref{main2} (ii). The only exception is the case that $f(x)=\pm(1-x^2)$  and $m$ is even. The reason is because if $m$ is even,  then one should not expect the result to hold true  for arbitrary polynomials $f(x)=ax^2+bx \in \mathbb{F}_{q^2}[x]$. Indeed, it can be checked that  if  $\mathbb{F}_{81}^*=\langle \alpha \rangle$, then the curve $y^4=x^2-\alpha^2$   is  $\mathbb{F}_{81}$-maximal,  but $4\nmid 10$. Also, in general, the  given condition on a third root of $f(x)$ in Theorem \ref{main2}  (ii) cannot be removed. For instance, once again for $\mathbb{F}_{81}^*=\langle \alpha \rangle$, one can check that  $y^4=x(x-1)(x-\alpha)^2$ is  $\mathbb{F}_{81}$-maximal.

 \end{remark}

\section{Totally ramified Weierstrass points}

In positive characteristic, the  gap sequence  of  a  function field has an important arithmetic property which is recalled in the next Remark. This property turns out to be very useful   to decide   when a totally ramified place is a Weierstrass point.

\begin{remark}\label{condicao} If char$(F)=p>0$,  the gap sequence of $F$ has the following important arithmetic property: if $\lambda_1, \dots, \lambda_g$ is the gap sequence of $F$ and $\mu$ is an integer  that is $p$-adically not greater than $\lambda_i-1$ for some $i$, then $\mu+1$ is also a gap number of $F$(see \cite{V-M}). Such property will be used to study conditions for which a totally ramified place is Weierstrass. 
\end{remark}

\begin{lemma}\label{m+1} Notation as in  Corollary \ref{ponto}. If $P_i$ is a totally ramified place such that $m+1$ is a gap at $P_i$, then $P_i$ is a Weierstrass point of $F$.
\end{lemma}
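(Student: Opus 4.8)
The goal is to show that if $P_i$ is totally ramified and $m+1$ is a gap at $P_i$, then the gap sequence at $P_i$ differs from the generic gap sequence, i.e. $P_i$ is a Weierstrass point. The natural strategy is to produce a gap at $P_i$ that lies strictly above $2g-1$ would be impossible, so instead I will exhibit a gap at $P_i$ that is \emph{larger} than the largest generic gap, or — more robustly — compare the whole non-gap semigroup $H(P_i)$ with the generic one. The cleanest route is: use Corollary \ref{ponto} to understand $H(P_i)$ explicitly, observe that $m\in H(P_i)$ always (since $P_i$ is totally ramified, the function $x-\alpha_i$, or $1/(x-\alpha_i)$ when $\alpha_i=\infty$, has pole divisor exactly $mP_i$), and then argue that if additionally $m+1$ were \emph{not} a gap the semigroup would force small non-gaps that contradict Corollary \ref{generic}; conversely, since $m+1$ \emph{is} assumed to be a gap, I will show the gap sequence at $P_i$ cannot coincide with the generic one.

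More precisely, here are the steps in order. First, recall from Corollary \ref{generic} that at a generic place $Q_0$ every integer $s$ with $s<r/2$ (and more generally $s<\sum_i\{j\lambda_i/m\}$ for some $j$) is a gap; in particular the generic gap sequence begins $1,2,\dots$ up to at least $\lceil r/2\rceil -1$ consecutively, and one knows $m\in H(Q_0)$ is false in general — rather, I should pin down the generic semigroup near $m$. Second, and this is the key point, at the totally ramified place $P_i$ we have $m\in H(P_i)$; hence if $m+1\in H(P_i)$ as well then $2m, 2m+1, 2m+2,\dots$ and all sufficiently large integers that are sums of $m$ and $m+1$ lie in $H(P_i)$, so the gaps at $P_i$ are confined to $\{1,\dots,2m-1\}$, in fact to a rather restricted set. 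Third, I will count: the number of gaps is always exactly $g$, so comparing the list of gaps at $P_i$ (all $<2m$, with at most finitely many "large" ones) against the generic list forces, for the two to agree, that $g<2m$ roughly; then invoking Remark \ref{condicao} (the $p$-adic property) together with Corollary \ref{ponto} I can show the generic sequence and the $P_i$-sequence must actually differ unless $m+1$ is itself generic-gap — and the hypothesis is precisely that $m+1$ is a gap at $P_i$. The contrapositive formulation is likely cleanest: if $P_i$ is \emph{not} Weierstrass, then $H(P_i)$ equals the generic semigroup, which by the structure from Corollary \ref{ponto} (applied with $s$ near $m$) contains $m$ and is "dense" enough above $m$ that $m+1$ must be a non-gap — contradicting the hypothesis.

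The main obstacle I anticipate is making rigorous the step "$m\in H(P_i)$ and $m+1\in H(P_i)$ pin down the generic semigroup": one has to be careful that the generic place need \emph{not} have $m$ as a non-gap, so the comparison must be done at the level of which integers are \emph{forced} to be gaps/non-gaps. The right tool is Theorem \ref{main} / Corollary \ref{ponto} evaluated at $s=m+1$: there $\lfloor (s-1)/m\rfloor = 1$, so $m+1$ is a gap at $P_i$ iff $\sum_{i=0}^{r}\{t_u\lambda_i/m\}>2$, where $t_u$ is determined by $s=m+1$; meanwhile a short computation shows that $m$ is \emph{not} a gap (as $s=m$ gives $\lfloor(s-1)/m\rfloor=0$ but the relevant sum, with the corresponding $t_u$, is $<1$... actually one must check $m\in H(P_i)$ directly from total ramification). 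I expect the proof to run: $P_i$ Weierstrass $\iff$ its gap set $\neq$ generic gap set; since $m$ is a non-gap at $P_i$ and the semigroup generated by $m$ together with the generic non-gaps below $m$ would already contain $m+1$ unless $m+1$ is a generic gap, and the $p$-adic property of Remark \ref{condicao} propagates this, one concludes that $m+1$ being a gap at $P_i$ is incompatible with $H(P_i)$ being the generic semigroup. I would verify the base arithmetic via Corollary \ref{ponto} with $s\in\{m,m+1\}$ and finish by this semigroup-comparison argument.
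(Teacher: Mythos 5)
Your outline assembles the right ingredients (argue by contrapositive, use that $m$ is a non-gap at a totally ramified place, invoke Remark \ref{condicao}), but the decisive step is missing, and the reasoning you put in its place does not work. You try to reach a contradiction by arguing that if $H(P_i)$ were the generic semigroup it would be ``dense enough above $m$'' to force $m+1$ to be a non-gap --- e.g.\ because the semigroup generated by $m$ together with the generic non-gaps below $m$ already contains $m+1$. There is no reason for that: the generic semigroup need not contain $m$ at all (as you yourself note), and even granting $m\in H(P_i)$, the non-gaps below $m$ need not produce $m+1$ as a sum. Nothing in your plan forces $m+1$ to be a generic non-gap, so the intended contradiction never materializes; moreover your sentence ``the sequences must differ unless $m+1$ is itself a generic gap'' has the logic inverted, since the case $m+1$ a generic gap is exactly the case one must refute.

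The paper's argument runs in the opposite direction. Suppose $P_i$ is not Weierstrass, so its gap sequence is the generic one; then $m+1$ \emph{is} a gap of $F$. Now apply Remark \ref{condicao} to the gap $m+1$ with $\mu=m-1$: since $p\nmid m$, write $m=cp+d$ with $1\le d\le p-1$; the base-$p$ digits of $\mu=m-1=cp+(d-1)$ are dominated by those of $(m+1)-1=m$, so $\mu+1=m$ is also a gap of $F$, hence a gap at $P_i$. This contradicts the fact that $m$ is a non-gap at every totally ramified place, because $(1/(x-\alpha_i))_\infty=mP_i$ (and $(x)_\infty=mP_\infty$). The $p$-adic descent from the gap $m+1$ to the gap $m$ is the whole content of the lemma, and it is precisely the step your outline leaves as ``the $p$-adic property propagates this'' without specifying what is propagated to what, or where the coprimality $p\nmid m$ enters. (In characteristic $0$ the statement is immediate, since there the generic gap sequence is $1,\dots,g$ and one argues directly that $m\le m+1\le g$ would make $m$ a gap.)
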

\begin{proof}
If $P_i$ is not a Weierstrass point of $F$, then its gap sequence is the actual gap sequence of the field $F$. This imples that  $m+1$ is in the gap sequence of $F$. The definition of $m$ gives $(m,p)=1$, and so one can write $m=cp+d$  where $c$ and $d$ are non-negative integers, and $1 \leq d \leq p-1$. Note that the $p$-adic coefficients of $m-1=cp+(d-1)$ are not greater than the corresponding coefficients of $m$, and by the arithmetic condition (cf. Remark \ref{condicao})\, $m$ is in the gap sequence of $F$. In particular, $m$ is a gap at $P_i$. However, this is a clear contradiction  to the (easy to check) fact that  $(\frac{1}{x-\alpha_i})_\infty = mP_i$ for $i=1, \dots, r$ and $(x)_\infty =mP_\infty$ , which finishes the proof.
\end{proof}

The following illustrates how the characterization given in Corollary \ref{ponto} can be used to derive various conditions under which all totally ramified places are Weierstrass. Note that item $(i)$ below extends a Valentini-Madan's result  (Theorem $3$ in \cite{V-M}).

\begin{cor}\label{ugly}
Using the notation of Corollary \ref{ponto}, let $k\geq 1$ be  the number of   totally ramified places of $F$, and $w.l.o.g.$ assume that $\Gamma=\{\lambda_1, \ldots,\lambda_k\}$ is the set of multiplicities of the roots of $f(x)$ associated to  such places.  Then the totally ramified places of $F$ are Weierstrass  if  any the following holds:
\begin{enumerate}
\item[(i)]  $r \geq m+3.$
\item[(ii)]  $3\leq k\leq r\leq 4$ and $\#\Gamma=1$ or $3 \leq k$ and $\#\Gamma=1$ and $ 4 \leq m$.
\item[(iii)]  $r\geq5$ and the map $\varphi: \{1,\ldots,k\}\longrightarrow \Gamma $  given by $\varphi(i)=\lambda_i$ satisfies $\#\varphi^{-1}(\{\lambda_j\})\geq2$ for $j=1,\ldots,k$. 
\item[(iv)] $r=\phi(m)>4$ and $\Gamma=(\Z/(m))^{\times}$, where
$\phi$ is the Euler's totient function\footnote{One can actually pick any subgroup  $\Gamma \subseteq (\Z/(m))^{\times}$, of order greater than $4$, such that $m-1 \in \Gamma $.}
\end{enumerate}
\end{cor}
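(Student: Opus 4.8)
The unifying strategy for all four items is the same: use Corollary~\ref{ponto} to show that a suitable small integer (either $m+1$ or, in one case, a value forcing a gap past $m+1$) is a gap at each totally ramified place $P_u$, and then invoke Lemma~\ref{m+1} to conclude that $P_u$ is a Weierstrass point. By Corollary~\ref{ponto}, since $P_u$ is totally ramified we have $m_u=m$, and for $s=m+1$ one has $\lfloor (s-1)/m\rfloor = 1$, so $s=m+1$ is a gap at $P_u$ precisely when
$$\sum_{i=0}^r\Big\lbrace\frac{t_u\lambda_i}{m}\Big\rbrace > 2,$$
where $t_u\in\{0,\ldots,m-1\}$ is the unique solution of $s+\eta_u t_u\equiv 0\bmod m$ (and $\lambda_0=0$, $t_0=0$ when $\alpha_u=\alpha_0$; for a ramified place $\lambda_u\neq 0$). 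The term indexed by $i=u$ contributes $\{t_u\lambda_u/m\}$, and the key point is that one must control the size of the sum of fractional parts over the remaining indices. So the proof reduces, in each case, to a counting/estimation lemma: under the stated hypotheses, for every choice of totally ramified place, the relevant $t_u$ makes at least three of the numbers $\{t_u\lambda_i/m\}$ (including $i$'s ranging over $0,1,\ldots,r+1$, recalling $\lambda_{r+1}=-\sum\lambda_i$ and $\lambda_0=0$) sum to more than $2$ — equivalently, "most" of the $\lambda_i$ are not divisible by $m/\gcd$, so their fractional parts are bounded below.

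First I would handle (i): if $r\geq m+3$, then among the $r$ roots, after discarding at most the one with index $u$ and possibly $i=0$, we still have at least $m+1$ fractional parts $\{t_u\lambda_i/m\}$ to sum; since each nonzero one is $\geq 1/m$ and at most one can vanish for each residue class... more carefully, the $r$ values $t_u\lambda_1,\ldots,t_u\lambda_r$ modulo $m$ take at most $m$ distinct values, and one pigeonholes to guarantee the sum of their fractional parts exceeds $2$ — this is essentially the Valentini–Madan argument (Theorem~3 of \cite{V-M}) adapted via Corollary~\ref{ponto}, now with $r$ in place of the number of ramified places. For (ii) and (iii), the hypotheses are designed so that the multiplicities repeat: if several $\lambda_i$ are equal to some $\lambda$ with $(\lambda,m)=1$, then choosing $t_u$ as in Lemma~\ref{inteiro}(ii) (or its analogue) makes each of those fractional parts equal to $(m-1)/m$, so having at least three such indices already gives a sum $>2$. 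One must check the boundary values of $m$ and $r$ ($4\leq m$, $r\leq 4$, etc.) make the inequality strict; the second clause of Lemma~\ref{gap} (the odd-multiplicity refinement) and Lemma~\ref{m+1}'s observation that $m$ itself is never a gap are the tools for the low-$m$ corner cases. For (iv), with $\Gamma=(\Z/m)^\times$ and $r=\phi(m)$, multiplication by $t_u$ permutes $(\Z/m)^\times$, so $\sum_i\{t_u\lambda_i/m\}=\sum_{a\in(\Z/m)^\times}\{a/m\}=\phi(m)/2$ (pairing $a$ with $m-a$), and $\phi(m)/2>2$ exactly when $\phi(m)>4$; the footnote's generalization to any subgroup containing $m-1$ works the same way since $m-1\equiv -1$ guarantees the pairing stays inside $\Gamma$.

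The main obstacle will be the careful bookkeeping in item (ii), where the ranges of $k$, $r$, $m$ interact and the inequality is tight: one genuinely needs the refinement that $m+1$ being a gap may fail but then a \emph{larger} small integer is forced to be a gap, or alternatively one must rule out the possibility that the gap sequence is ordinary by a parity/arithmetic argument as in Lemma~\ref{m+1} and Remark~\ref{condicao}. Concretely, when $m=3$ and $r=k=3$ with all $\lambda_i=1$, one checks directly that the fractional-part sum for the appropriate $t_u$ equals $2$ exactly, so $m+1=4$ is \emph{not} a gap by Corollary~\ref{ponto}, and one must instead show $2$ is a gap (via Corollary~\ref{generic}, since $2 < r/2$ fails, so this needs the full Theorem~\ref{main}) — this is why the hypothesis excludes $m=3$ in part of (ii). I would organize the proof by first stating and proving the fractional-part estimate as a standalone computation, then dispatching (i)--(iv) each in a short paragraph applying it together with Lemma~\ref{m+1}, and finally treating the genuinely small cases by hand.
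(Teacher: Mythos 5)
Your strategy is the paper's: reduce each item to showing that $m+1$ is a gap at the totally ramified place (so that Lemma \ref{m+1} applies), which via Corollary \ref{ponto} with $s=m+1$ becomes the inequality $\sum_{j=1}^{r}\big\lbrace \tfrac{t\lambda_j}{m}\big\rbrace>2$, where $t\lambda_u\equiv m-1 \pmod{m}$; your arguments for (iii) and (iv) (repeated multiplicities each contribute $(m-1)/m$; multiplication by $t$ permutes $(\Z/(m))^{\times}$, giving $\phi(m)/2>2$ iff $\phi(m)>4$) are exactly the paper's.

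Two points of divergence. First, your estimate in (i) does not close as written: discarding the index $u$ and summing ``at least $m+1$ fractional parts each $\ge 1/m$'' only yields a sum greater than $1$, not greater than $2$. The paper instead keeps the $i=u$ term, which equals $(m-1)/m$ precisely because $t\lambda_u\equiv m-1\pmod{m}$, and adds the remaining $r-1\ge m+2$ terms, each at least $1/m$ (none vanishes, since $(t,m)=1$ and $0<\lambda_j<m$), obtaining $(m-1+r-1)/m\ge(2m+1)/m>2$; no pigeonhole argument is involved. Second, your concern about $m=3$ in item (ii) is well founded and in fact touches a weak spot of the paper's own proof: the bound used there is $3\cdot\frac{m-1}{m}+\sum_{j\notin\{i,i_1,i_2\}}\frac{r_j}{m}$, which exceeds $2$ only when $m\ge 4$ or when a fourth root contributes a positive term; for $m=r=k=3$ the sum equals exactly $2$ (and the curve then has genus $1$, so no place is Weierstrass), so the clause ``$4\le m$'' (respectively the presence of a fourth root when $r=4$) is doing real work, exactly as you suspected. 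Apart from the arithmetic slip in (i), your plan coincides with the paper's proof.
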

\begin{proof}
Let $P_i$ be a totally ramified place of $F$ and $t \in \{1,\ldots,m-1\}$ be such that $t\lambda_i \equiv m-1 \mod m$. For $j=1,\ldots,k$, let $r_j$ be the remainder of the division of $t\lambda_j$ by $m$. Clearly $r_i=m-1$, and  we claim that $r_j\geq 1$ for all $j$. In fact, first notice that
$t\lambda_i \equiv m-1 \mod m$ implies $(t,m)=1$. Now if there is a $j$ such that $r_j=0$, then $m$ divides $t\lambda_j$; that is, $m$ divides $\lambda_j$, which contradicts $\lambda_j<m$. Moreover, since $\bigg\lbrace \frac{t\lambda_j}{m}\bigg\rbrace=\frac{r_j}{m}$, based on Corollary \ref{ponto} and Lemma \ref{m+1} we only need to prove that $m+1$ is a gap at $P_i$, that is:   $$\sum\limits_{j=1}^{r}\Big\lbrace \frac{t\lambda_j}{m}\Big\rbrace=\frac{m-1}{m}+\sum\limits_{j\neq i}\frac{r_j}{m}>2.$$

\begin{enumerate}

\item[(i)] Since  $r_j \geq 1$ we get 
 $$\frac{m-1}{m}+\sum\limits_{j\neq i}\frac{r_j}{m}\geq \frac{m+r-2}{m}>2,$$ as desired. 
 

\item[(ii)] Since $\#\Gamma =1$,  there exist $i_1,i_2 \in \{1,
\ldots,k\}$, with $i, i_1$ and $i_2$  all distinct, such that $r_i=r_{i_1}=r_{i_2}=m-1$. Hence,
$$\frac{m-1}{m}+\sum\limits_{j\neq i}\frac{r_j}{m}=3\cdot\frac{m-1}{m}+\sum\limits_{j\notin \{i,i_1,i_2\}}\frac{r_j}{m}>2,$$
and again  the result follows.

\item[(iii)] The condition on the map $\varphi $ implies that one can find $i_1 \in \{1,
\ldots,k\}$, with $i_1\neq i$, such that $r_i=r_{i_1}=m-1$. Thus 
$$\frac{m-1}{m}+\sum\limits_{j\neq i}\frac{r_j}{m}=2\cdot\frac{m-1}{m}+\sum\limits_{j\notin \{i,i_1\}}\frac{r_j}{m}\geq \frac{2m-2+k-2}{m}>2.$$

\item[(iv)] Since $(t,m)=1$,  the map $\lambda_j \mapsto  t\lambda_j $ is just a permutation on $\Gamma$. Therefore,
$$\sum\limits_{j=1}^{k}\Big\lbrace \frac{t\lambda_j}{m}\Big\rbrace=\sum\limits_{j=1}^{k}\frac{r_j}{m}=\sum\limits_{j=1}^{k}\frac{\lambda_j}{m}=\frac{m\phi(m)}{2m}>2.$$

\end{enumerate}
\end{proof}

Note that from Corollary \ref{generic} and Corollary \ref{ugly}.(iii), we recover a characterization of Weierstrass points on hyperelliptic curves. In fact, consider the curve $y^2=(x-\alpha_1)\cdots (x-\alpha_r)$ of  genus  $g=\bigg\lfloor \frac{r-1}{2} \bigg\rfloor$, $r \geq 5$. From Theorem \ref{main}: $t_u\in \{0,1\}$ and $t_u \equiv -s \mod 2$. Therefore from Corollary \ref{ponto} a number  $s$ is a gap at $P$ if, and only if,  $s$ is odd and 
 
$$\frac{r}{2}=\sum\limits_{i=1}^r\Big\lbrace\frac{1}{2}\Big\rbrace>1+\Big\lfloor\frac{s-1}{2}\Big\rfloor.$$
That is, the gap numbers at $P$ will be the $\Big\lfloor\frac{r-1}{2}\Big\rfloor=g$ odd numbers $s<r-1$.

From Corollary \ref{generic}, the numbers $1,2,\ldots,g$ (which are $<r/2$) are gap number at any unramified point $P$. Since this sequence has length $g$, these are all the gap numbers at $P$. In particular  Hyperelliptic curves are  { \bf classical}. With this discussion, we retrieve (see Satz 8 in \cite{Sch}) the following result:

\begin{theorem}[F. K. Schmidt]
 Let $C$ be a hyperelliptic curve defined over a perfect field of characteristic $p$. If $P$ is a point on $C$, then we have the
following:
\begin{itemize}
\item  $P$ is a Weierstrass point if and only if $P$ is a ramification point of the hyperelliptic cover (or, equivalently, $P$ is a fixed point of the hyperelliptic involution).

\item If $P$ is a non-Weierstrass point on the curve $C$, then $P$ has gap sequence $1,\ldots, g$ and so $H(P) = \{0, g + 1, g + 2,\ldots\}$.

\end{itemize}

\end{theorem}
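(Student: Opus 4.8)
The plan is to derive F.~K.~Schmidt's theorem as a direct specialization of the machinery already developed, exactly as the paragraph preceding the statement sketches. First I would reduce to the standard model: any hyperelliptic curve over a perfect field of characteristic $p$ (with $p\neq 2$, so that the cover $C\to \mathbb P^1$ is a Kummer extension $y^2=f(x)$ with $f$ separable of degree $r\in\{2g+1,2g+2\}$; the characteristic $2$ case is handled separately by the Artin--Schreier analogue, or one simply invokes that a hyperelliptic curve admits such a model over a perfect field) one has $r\geq 5$ unless $g\leq 1$, and the cases $g=0,1$ are trivial or classical by inspection. So assume $r\geq 5$, all $\lambda_i=1$, and $m=2$.

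Next I would compute the gap sequence at a ramification point $P$ (a totally ramified place). Apply Corollary~\ref{ponto}: here $m_u=m=2$, and the unique $t_u\in\{0,1\}$ with $s+t_u\equiv 0\bmod 2$ is $t_u\equiv -s\equiv s\bmod 2$, so $\{t_u\lambda_i/2\}=\{t_u/2\}$ equals $1/2$ when $s$ is odd and $0$ when $s$ is even. Thus $\sum_{i=1}^r\{t_u\lambda_i/2\}$ is $0$ if $s$ is even (so $s$ is a non-gap, since $0\le 1+\lfloor(s-1)/2\rfloor$), and is $r/2$ if $s$ is odd. Hence for odd $s$, $s$ is a gap if and only if $r/2>1+\lfloor(s-1)/2\rfloor$, i.e. $s<r-1$. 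The odd integers $s$ with $1\le s<r-1$ number exactly $\lfloor (r-1)/2\rfloor=g$, which by the Weierstrass Gap Theorem must be all the gaps at $P$; in particular $P$ is a Weierstrass point precisely when its gap sequence $1,3,5,\dots$ differs from the generic one, which it does once $g\ge 1$. Conversely I would show a non-ramified point $Q$ is non-Weierstrass with gap sequence $1,\dots,g$: by Corollary~\ref{generic}(i), every $s<r/2$ is a gap at $Q$; since $g=\lfloor(r-1)/2\rfloor$ integers $1,2,\dots,g$ all satisfy $g<r/2$ (indeed $2g\le r-1<r$), these $g$ values are gaps, and by the gap theorem they exhaust the gap set, giving $H(Q)=\{0,g+1,g+2,\dots\}$. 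Combining: $P$ is Weierstrass $\iff$ $P$ is a ramification point, and the non-Weierstrass gap sequence is the ordinary one $1,\dots,g$, so in particular $C$ is classical.

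The only genuine subtlety — and the step I expect to need the most care — is the bookkeeping around the two possible degrees $r=2g+1$ and $r=2g+2$: one must check that $\lfloor(r-1)/2\rfloor=g$ in both cases and that the point at infinity is itself a ramification point exactly when $r$ is odd, so that the dichotomy ``ramified $\iff$ Weierstrass'' is stated for the right set of points; this is entirely elementary but is where an off-by-one error would hide. The characteristic-$2$ and the low-genus ($g\le 1$) cases are edge cases to dispose of at the outset rather than obstacles. Everything else is a direct reading-off of Corollaries~\ref{ponto} and~\ref{generic}, so no new ideas are required beyond Theorem~\ref{main}.
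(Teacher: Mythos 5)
Your proposal is correct and follows essentially the same route as the paper: Corollary \ref{ponto} gives the gaps at a ramification point as the odd integers $s<r-1$ (exactly $g$ of them), and Corollary \ref{generic}(i) gives $1,\dots,g$ as the full gap set at an unramified point, so ramified $\Leftrightarrow$ Weierstrass and the curve is classical. Your extra care about characteristic $2$ (where the cover is Artin--Schreier rather than Kummer and the paper's framework does not literally apply) and about whether $P_\infty$ is ramified is a reasonable refinement that the paper silently omits, but it does not change the argument.
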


\section{A special Kummer extension}\label{sep}

In this section, we consider only Kummer extensions $F/K(x)$ where $\lambda_i=1$ for all $i$.
We begin by noticing that if $\deg f(x)=m$, then by Corollary \ref{generic} all the integers $1, \dots, m-2$ are gaps at a generic place of $F/K(x)$. This generalizes Leopoldt's result \cite{Leo}. For the totally ramified places, we have the following:

\begin{theorem}\label{pesos}
If the genus $g \geq 1$ and $P\neq P_\infty$ is a totally ramified place, then the set of gap numbers at $P$ is given by
$$G(P)=\Big\{1+i+mj \,|\, 0 \leq i \leq m-2-\bigg\lfloor m/r\bigg\rfloor, \, 0\leq j \leq r-2- \bigg\lfloor\frac{r(i+1)}{m}\bigg\rfloor\Big\}.$$
\end{theorem}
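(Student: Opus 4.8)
The plan is to apply Corollary \ref{ponto} directly, since in this section every $\lambda_i=1$ and $P$ is totally ramified, so $(m,\lambda_u)=1$, $m_u=m$, and the relevant $t_u\in\{0,\ldots,m-1\}$ is simply the unique residue with $s+t_u\equiv 0\bmod m$. Writing a gap number in the form $s=1+i+mj$ with $0\le i\le m-1$ and $j\ge 0$, we have $\lfloor (s-1)/m\rfloor=j$ and $t_u\equiv -(1+i)\equiv m-1-i\pmod m$, so $t_u=m-1-i$ when $0\le i\le m-1$. Since $\lambda_i=1$ for the roots $\alpha_1,\dots,\alpha_r$ and $\lambda_0=0$, the left-hand side of the gap inequality in Corollary \ref{ponto} is
$$\sum_{i=0}^{r}\Big\lbrace\frac{t_u\lambda_i}{m}\Big\rbrace=r\Big\lbrace\frac{m-1-i}{m}\Big\rbrace=\frac{r(m-1-i)}{m},$$
valid precisely when $0\le i\le m-1$ (so that $m-1-i$ is already the residue). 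Hence $s=1+i+mj$ is a gap at $P$ if and only if $\frac{r(m-1-i)}{m}>1+j$, i.e. $j<\frac{r(m-1-i)}{m}-1$, i.e. $j\le \big\lceil \frac{r(m-1-i)}{m}\big\rceil-2$ — here I will need to be careful about whether the quantity is an integer, and it is cleaner to rewrite $\frac{r(m-1-i)}{m}-1=r-1-\frac{r(i+1)}{m}$, so the condition becomes $j\le r-2-\big\lfloor\frac{r(i+1)}{m}\big\rfloor$ after checking the ceiling/floor bookkeeping with Remark \ref{lulu}(i).

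Next I would pin down the admissible range of $i$. The upper bound $j\le r-2-\lfloor r(i+1)/m\rfloor$ can only be satisfied by some $j\ge 0$ when $\lfloor r(i+1)/m\rfloor\le r-2$; I will show this is equivalent to $i\le m-2-\lfloor m/r\rfloor$. One direction: if $i>m-2-\lfloor m/r\rfloor$ then $i+1> m-1-\lfloor m/r\rfloor\ge m-1-m/r$, so $r(i+1)>r(m-1)-m$, giving $r(i+1)\ge r(m-1)-m+1$ and thus $\lfloor r(i+1)/m\rfloor\ge r-1-\lfloor (m-1)/m\rfloor=r-1$ (with a short integrality check), so no valid $j$ exists. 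Conversely, for $i\le m-2-\lfloor m/r\rfloor$ one checks $\lfloor r(i+1)/m\rfloor\le r-2$, so $j=0$ is admissible. Also $i=m-1$ is excluded because then the left side $\frac{r(m-1-i)}{m}=0$ is never $>1+j$; this is consistent with the stated range since $\lfloor m/r\rfloor\ge 1$ when $g\ge 1$ forces $r\le m$ — indeed $g\ge 1$ rules out the rational cases $r\le 2$ and, together with $\deg f=m$ being implicit here (the exponent pattern with all $\lambda_i=1$ and $\lambda_{r+1}=-r$ nonzero mod $m$ forcing $r<m$, or $r=m$), gives $\lfloor m/r\rfloor\ge 1$.

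Finally I would assemble: the set of gap numbers at $P$ is exactly $\{1+i+mj: 0\le i\le m-2-\lfloor m/r\rfloor,\ 0\le j\le r-2-\lfloor r(i+1)/m\rfloor\}$, which is the claimed $G(P)$. As a sanity check (and to make the proof self-contained) I would verify the cardinality of this set equals $g$: summing $r-1-\lfloor r(i+1)/m\rfloor$ over $i=0,\dots,m-2-\lfloor m/r\rfloor$ should give the genus $g=\frac{(r-1)(m-1)}{2}-\frac{1}{2}(\text{correction})$ of the curve $y^m=f(x)$ with $\deg f=m$; this is a finite combinatorial identity (a lattice-point count under the line $rx+my=rm$) and follows from the Riemann–Hurwitz genus formula for this extension. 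The main obstacle is purely the floor/ceiling bookkeeping: translating the strict inequality $\frac{r(m-1-i)}{m}>1+\lfloor(s-1)/m\rfloor$ into the clean double-indexed description, and correctly identifying the $i$-range, requires careful case analysis of when $r(i+1)/m$ or $r(m-1-i)/m$ is an integer, all handled via Remark \ref{lulu}. No genuinely hard idea is needed beyond Corollary \ref{ponto}.
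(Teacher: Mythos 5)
Your proposal is correct and follows essentially the same route as the paper: apply Corollary \ref{ponto} with $t_u=m-1-i$ for $s=1+i+mj$, reduce the gap condition to $j<r-1-r(i+1)/m$, and convert the strict inequalities into the floor-bounded ranges for $i$ and $j$. The only quibble is your side remark that $g\ge 1$ forces $r\le m$ (hence $\lfloor m/r\rfloor\ge 1$): this is neither needed nor assumed — the theorem also covers $r>m$, and $i=m-1$ is already excluded by the bound $i\le m-2-\lfloor m/r\rfloor\le m-2$.
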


\begin{proof}
Since $\lambda_i=1$ for all $i=1,\ldots, r$, it follows from Theorem \ref{main} that $t:=t_1=\cdots=t_r=m-s \mod m$, where $t\in\{0,\ldots,m-1\}$. Writing $s-1=m\bigg\lfloor\frac{s-1}{m}\bigg\rfloor+k, 0 \leq k \leq m-1$, we have that
$m-s=m(-\bigg\lfloor\frac{s-1}{m}\bigg\rfloor)+m-k-1$, and since $0 \leq m-k-1 \leq m-1$, we conclude $\bigg\lbrace\frac{t}{m}\bigg\rbrace=\frac{m-1-k}{m}$.

Thus, from Corollary \ref{ponto}, $s=m\Big\lfloor\frac{s-1}{m}\Big\rfloor+k+1$ is a gap at $P$ if and only if
$$\Big\lfloor\frac{s-1}{m}\Big\rfloor+ 1<\frac{r(m-1-k)}{m},$$
that is, $\bigg\lfloor\frac{s-1}{m}\bigg\rfloor<r-1-\frac{r(k+1)}{m}$. Therefore, the gap numbers at $P$ will be of the form $s=mj+i+1$, where $i\in \{0,\ldots,m-1\}$ and $0\leq j < r-1-r(i+1)/m$. However, from  the latter inequality we have $r-1-r(i+1)/m>0$, that is $i<m-1-m/r$. Hence, since $i$ and $j$ are integers, we have $i\in \{0,\ldots,m-2-\bigg\lfloor m/r \bigg\rfloor\}$ and $j\in \{0,\ldots,r-2- \bigg\lfloor\frac{r(i+1)}{m}\bigg\rfloor\}$, which gives the result.
\end{proof}

The next corollary extends a Hasse's result \cite{Ha}: ``The totally ramified points on the Fermat curve $y^n=ax^n+b$ have Weierstrass weight  $(n-3)(n-2)(n-1)(n+4)/24.$"
 
\begin{cor}\label{hasse} If the curve $y^m=f(x)$ is  smooth and  classical, $\deg f(x)=r$, then any totally ramified place not equal to $P_\infty$, has Weierstrass weight
$$(r-3)(r-2)(r-1)(r+4)/24.$$
\end{cor}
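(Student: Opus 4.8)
The plan is to compute the Weierstrass weight of a totally ramified place $P\neq P_\infty$ directly from the gap set $G(P)$ provided by Theorem \ref{pesos}. Recall that, since the curve is classical, the Weierstrass weight of a point $P$ is $w(P)=\sum_{i=1}^{g}(\ell_i - i)$, where $\ell_1<\dots<\ell_g$ are the gaps at $P$; equivalently $w(P)=\left(\sum_{\ell\in G(P)}\ell\right)-\binom{g+1}{2}$. So the first step is to identify $g$ in terms of $r=\deg f(x)$: for the smooth classical curve $y^m=f(x)$ with $\lambda_i=1$ for all $i$ and $\deg f=r$, one has $m\mid r$ (smoothness at infinity forces this when $P_\infty$ is unramified, which is the relevant case), and the genus is $g=\frac{(m-1)(r-2)}{2}$ by Riemann–Hurwitz. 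I would record this at the outset.

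Next I would carry out the summation $\sum_{\ell\in G(P)}\ell$ using the explicit description
$$G(P)=\Big\{1+i+mj \,\Big|\, 0\le i\le m-2-\lfloor m/r\rfloor,\ 0\le j\le r-2-\big\lfloor \tfrac{r(i+1)}{m}\big\rfloor\Big\}.$$
Since we are in the situation $m\mid r$, write $r=m\rho$. Then $\lfloor m/r\rfloor = 0$ (as $r\geq m$, with $r=m$ excluded or handled separately since $g\ge 1$ forces $r>m$ here, or rather one checks the formula degenerates correctly), so $i$ ranges over $0,\dots,m-2$; and $\big\lfloor\frac{r(i+1)}{m}\big\rfloor=\rho(i+1)$ exactly, so for each fixed $i$ the index $j$ ranges over $0,\dots, r-2-\rho(i+1)$, an interval of length $r-1-\rho(i+1)=\rho(m-i-1)-1$. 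I would then split $\sum_{\ell\in G(P)}(1+i+mj)$ as a double sum: for fixed $i$, sum over $j$ to get a quadratic-in-$j$ closed form (using $\sum_{j=0}^{N-1}1 = N$ and $\sum_{j=0}^{N-1} j=\binom{N}{2}$ with $N=\rho(m-i-1)$), then sum the resulting polynomial in $i$ over $i=0,\dots,m-2$ using standard power-sum formulas $\sum i$, $\sum i^2$, $\sum i^3$. Subtracting $\binom{g+1}{2}$ with $g=\frac{(m-1)(r-2)}{2}$ should collapse everything to the claimed value, which after substituting $m=r$ (the Fermat-type case with $\lambda_i=1$, $\deg f = r = m$) reduces to $(r-3)(r-2)(r-1)(r+4)/24$.

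Actually, I should be careful about the interpretation: the corollary states $\deg f(x)=r$, and "smooth classical $y^m=f(x)$" with $\lambda_i=1$; for the point at a root of $f$ to be totally ramified and the curve smooth, the natural regime that makes the stated answer purely a function of $r$ is $m=r$ (so $f$ is separable of degree $m$, as in the Fermat curve $y^m=ax^m+b$ up to coordinate change). I would therefore set $m=r$ from the start, so that $G(P)=\{1+i+rj : 0\le i\le r-2,\ 0\le j\le r-2-(i+1)\}=\{1+i+rj: 0\le i\le r-2,\ 0\le j\le r-3-i\}$, $g=\frac{(r-1)(r-2)}{2}$, and then evaluate
$$w(P)=\sum_{i=0}^{r-2}\sum_{j=0}^{r-3-i}(1+i+rj)\ -\ \binom{g+1}{2}.$$
The inner sum over $j$ (length $r-2-i$) is $(r-2-i)(1+i)+r\binom{r-2-i}{2}$; summing this over $i$ from $0$ to $r-2$ and subtracting the binomial term is a finite, purely mechanical polynomial identity.

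The main obstacle is purely bookkeeping: correctly handling the edge behavior of the floor functions $\lfloor m/r\rfloor$ and $\lfloor r(i+1)/m\rfloor$ so that the index ranges are exact intervals (this is clean precisely in the $m=r$, or more generally $m\mid r$, setting, which is why the hypotheses "smooth" and "$\deg f = r$" matter), and then not making an arithmetic slip in the cubic-order power-sum computation. There is no conceptual difficulty beyond Theorem \ref{pesos} and the classical weight formula $w(P)=\sum_{\ell\in G(P)}\ell-\binom{g+1}{2}$; the content is verifying that the combinatorial sum telescopes to $(r-3)(r-2)(r-1)(r+4)/24$, which one can double-check by testing small values of $r$ (e.g. $r=4$ gives $1\cdot2\cdot3\cdot8/24=2$) against a direct enumeration of $G(P)$.
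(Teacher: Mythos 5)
Your strategy coincides with the paper's: take the gap set $G(P)$ from Theorem \ref{pesos}, use $W(P)=\sum_{\ell\in G(P)}\ell-g(g+1)/2$, and evaluate the double sum. In the case $m=r$ your bookkeeping is correct: the index $i=r-2$ you allow contributes nothing since its $j$-range is empty, the inner sum $(r-2-i)(1+i)+r\binom{r-2-i}{2}$ is right, and the power sums do collapse to $(r-3)(r-2)(r-1)(r+4)/24$ after subtracting $\binom{g+1}{2}$ with $g=(r-1)(r-2)/2$. Your sanity check at $r=4$ is also correct.

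The genuine gap is the reduction to $m=r$. The claim that smoothness forces $m\mid r$ is false: for $f$ separable of degree $r$, the projective closure of $y^m=f(x)$ is nonsingular if and only if $r\in\{m-1,m,m+1\}$ (for instance $y^3=f(x)$ with $\deg f=4$ is a smooth plane quartic, and there $P_\infty$ is totally ramified, not unramified). This trichotomy is precisely what the paper's proof records and works through: for $r\in\{m-1,m\}$ one checks that Theorem \ref{pesos} returns the same set $G(P)=\{1+i+mj:\ 0\le i\le m-3,\ 0\le j\le m-3-i\}$ in both cases (because $\lfloor m/r\rfloor=1$ and $\lfloor r(i+1)/m\rfloor$ equals $i+1$ resp.\ $i$), and the case $r=m+1$ is asserted to be analogous. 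So your argument, as written, establishes the corollary only in one of the three regimes permitted by the hypothesis ``smooth,'' and the justification offered for discarding the other two does not hold. Note also that in the cases you skipped the computation produces the quartic expression in $m$ rather than in $r=\deg f$ (for $r=m-1$ the gap set, hence the weight, is literally the same as for degree $m$), so those cases require separate comment rather than being subsumed by the $m=r$ calculation; a complete proof must at least address them, as the paper's does.
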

\begin{proof}
 
Since $f(x)$ is separable, the curve $y^m=f(x)$  is smooth if and only if $r\in\{m-1,m,m+1\}$. From Theorem  \ref{pesos}, if $r\in\{m-1,m\}$ (the case $r=m+1$ will follow analogously) then we have \footnote{Note that $ \#G(P)=\sum_{i=0}^{m-3} \left( m-2- i\right)=(m-1)(m-2)/2= g,
$ as expected.}
$$G(P)=\{1+i+mj \, |\, 0 \leq i \leq m-3, \quad 0\leq j \leq m-3-i\}.$$
The Weierstrass weight at $P$ is
\begin{eqnarray*}
W(P) &=&\sum\limits_{\lambda\in G(P)}\lambda-g(g+1)/2\\
& = &\sum\limits_{i=0}^{m-3}\sum\limits_{j=0}^{m-3-i}(1+i+mj) -g(g+1)/2\\
 &=&(m-3)(m-2)(m-1)(m+4)/24,
\end{eqnarray*}
where  $g=(m-1)(m-2)/2$.
\end{proof}

We point out the $G(P)$ in the proof above can also be obtained by looking at the $(\mathcal{D},P)$-orders   for suitable linear systems  of type $\mathcal{D}=|dP|$.

\subsection{The number of certain Weierstrass points}
In this section, $f(x) \in K[x]$  will be a separable polynomial, and the Kummer extension $y^m=f(x)$ will be classical.

Characterizing Weierstrass points as well as estimating their weights is a fundamental problem in the theory of algebraic curves. In the case of  classical curves,  a fundamental result by Lewittes \cite{Lew} says  that if the curve has an automorphism that fixes at least five points, then all of the fixed points must be Weierstrass points. For 
example,  hyperelliptic curves have an involution fixing all branch points, thus it follows that all such points are  Weierstrass points. As we know, these points  are the whole of the Weierstrass points on a hyperelliptic curve. 

Hyperellipic curves can be generalized by  the curves of type $y^m=f(x)$, where $f(x)$ is a polynomial of distinct roots. Clearly, the map $(x,y)\mapsto (x,\zeta_ny)$, where $\zeta_n$ is a $n^{th}$ primitive roots of unity, is an automorphism of these curves, and it fixes all branch  points. Therefore, in the classical case, all  branch points of these curves are Weierstrass points. In this case, it is also  well known that  total number of Weierstrass points, counted with multiplicity, is $g^3-g$. where $g$ is the genus of the curve.

 A natural question, raised by Towse in 1996 \cite{To}, is how to relate the   total weight, namely $BW$, of all of the branch points of $y^m=f(x)$, with to the total weight $g^3-g$ of 
 Weierstrass points on these curves. For this, Towse studied the asymptotic behavior of the ratio $BW/(g^3-g)$, as $r:=\deg f(x)$ goes to infinity. Towse's main result  in  \cite{To} states that
 
 $$\lim \inf_{r\rightarrow\infty}\frac{BW}{g^3-g}\geq \frac{1}{3}\frac{m+1}{(m-1)^2},$$
where $r=\deg f(x)$.

%
Now, using Theorem \ref{pesos}, we prove that  Towse's result can be  extended to the following:

\begin{cor} Under the assumptions of Corollary \ref{hasse}, we have that:
$$\lim_{r\rightarrow\infty}\frac{BW}{g^3-g}=\frac{1}{3}\frac{m+1}{(m-1)^2}.$$
\end{cor}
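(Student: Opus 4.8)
The plan is to compute $BW$ asymptotically using the explicit gap set from Theorem~\ref{pesos} and compare it with the known total weight $g^3-g$. First I would recall that, by the discussion preceding Corollary~\ref{hasse}, the curve is smooth exactly when $r \in \{m-1, m, m+1\}$, but for the asymptotic statement $r \to \infty$ with $m$ fixed we must instead let $r$ range over all values with $\gcd$-type conditions making the curve classical; more importantly, the genus is $g = \frac{(m-1)(r-1)}{2}$ up to lower-order terms (precisely $g=\frac{(m-1)(r-1)}{2}$ when $m \mid r$ or similar, and in general $g \sim \frac{(m-1)r}{2}$), so $g^3 - g \sim \frac{(m-1)^3 r^3}{8}$. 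The number of branch points is $r$ (or $r+1$ including $P_\infty$; this contributes a lower-order correction), so $BW = r\cdot W(P) + O(\text{lower order})$, and the whole problem reduces to an asymptotic estimate of $W(P)$ for a single totally ramified place $P \neq P_\infty$.

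Next I would compute $W(P) = \sum_{\lambda \in G(P)} \lambda - \frac{g(g+1)}{2}$ using the parametrization
$$G(P) = \Big\{1 + i + mj \;\Big|\; 0 \le i \le m-2-\lfloor m/r\rfloor,\ 0 \le j \le r-2-\lfloor r(i+1)/m\rfloor\Big\}$$
from Theorem~\ref{pesos}. For $r$ large and $m$ fixed, $\lfloor m/r \rfloor = 0$, so $i$ runs over $\{0,\dots,m-2\}$ and for each such $i$ the index $j$ runs over roughly $r - \frac{r(i+1)}{m}$ values. The dominant contribution to $\sum_{\lambda \in G(P)}\lambda$ comes from the $mj$ term: summing $mj$ over $0 \le j \le r(1 - (i+1)/m) + O(1)$ gives $\approx \frac{m}{2}\big(r(1-(i+1)/m)\big)^2$, and then summing over $i = 0,\dots,m-2$ gives a leading term $\sim \frac{m r^2}{2}\sum_{i=0}^{m-2}(1 - (i+1)/m)^2 = \frac{m r^2}{2}\sum_{k=1}^{m-1}(k/m)^2 \sim \frac{r^2}{2}\cdot\frac{(m-1)^3}{3m}$... wait — I would carefully recompute: $\sum_{k=1}^{m-1} k^2 = \frac{(m-1)m(2m-1)}{6}$, so this leading piece is $O(r^2)$. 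Meanwhile $\frac{g(g+1)}{2} \sim \frac{g^2}{2} \sim \frac{(m-1)^2 r^2}{8}$, also $O(r^2)$. So $W(P) = O(r^2)$, giving $BW = r \cdot W(P) = O(r^3)$, matching $g^3 - g = O(r^3)$, and the limit is a ratio of explicit cubic-in-$r$ (quadratic-in-$r$, times $r$) leading coefficients.

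The key step is then to extract these leading coefficients cleanly. I would write $g = \frac{(m-1)(r-1)}{2}$, expand $\sum_{\lambda\in G(P)}\lambda$ as a double sum and keep only the $r^3$-order terms after multiplying by the outer factor $r$ (equivalently, keep $r^2$-order terms in $W(P)$), replace the floor functions by their arguments (the errors are $O(r)$ in $W(P)$, hence $O(r^2)$ in $BW$, hence negligible against $r^3$), and evaluate the resulting polynomial identity. Then divide by $g^3 - g \sim \frac{(m-1)^3 r^3}{8}$ and check the constant simplifies to $\frac{1}{3}\cdot\frac{m+1}{(m-1)^2}$. The main obstacle is purely computational bookkeeping: handling the interaction of the two floor functions $\lfloor m/r\rfloor$ and $\lfloor r(i+1)/m\rfloor$ and making sure that the several $O(r)$-order discrepancies (from floors, from the choice of $\alpha_0$ and the extra point $P_\infty$, from $g$ versus $\frac{(m-1)(r-1)}{2}$) genuinely vanish in the limit; once one is confident these are lower order, the computation of the leading cubic coefficient is a routine sum of $\sum k$ and $\sum k^2$. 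Since Towse already established the matching lower bound $\liminf \ge \frac13\frac{m+1}{(m-1)^2}$ in \cite{To}, it would in fact suffice to prove the reverse inequality $\limsup BW/(g^3-g) \le \frac13\frac{m+1}{(m-1)^2}$, which only requires an upper bound on $W(P)$ and may allow one to be even more cavalier with the lower-order terms.
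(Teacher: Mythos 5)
Your proposal follows essentially the same route as the paper: use Theorem \ref{pesos} to write $S(P)=\sum_{\lambda\in G(P)}\lambda$ as a double sum, discard the floor-function and other $O(r)$ discrepancies, extract the leading coefficient $\frac{(2m-1)(m-1)}{12}r^2$ of $S(P)$ and hence $\frac{m^2-1}{24}r^2$ for $W(P)$, multiply by the $r$ branch points, and divide by $g^3-g\sim\frac{(m-1)^3}{8}r^3$. Your side remark that the smoothness hypothesis of Corollary \ref{hasse} is in tension with letting $r\to\infty$ for fixed $m$ is a fair observation about the statement (the paper's own proof quietly drops smoothness and just takes $r>m$), but it does not change the argument, which is correct.
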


\begin{proof}
Considering $r>m$, it follows from Theorem \ref{pesos} that the sum $S(P)$ of the gaps at any totally ramified place $P\neq P_{\infty}$ is given by:
$$S(P)=\sum\limits_{\lambda\in G(P)}\lambda=\sum\limits_{i=0}^{m-2}\quad \sum\limits_{j=0}^{r-2-\lfloor\frac{r(i+1)}{m}\rfloor}(1+i+mj)=$$
\begin{eqnarray*}
&=&\sum\limits_{i=0}^{m-2}\left[(r-1-\Big\lfloor\frac{r(i+1)}{m}\Big\rfloor)(1+i+\frac{m}{2}(r-2-\Big\lfloor\frac{r(i+1)}{m}\Big\rfloor)\right]\\ 
&=&\sum\limits_{i=0}^{m-2} \left[(r-1)(1+i)\right]+\sum\limits_{i=0}^{m-2} \left[ (r-1) (\frac{m}{2}(r-2-\Big\lfloor\frac{r(i+1)}{m}\Big\rfloor))\right] \\
&-&\sum\limits_{i=0}^{m-2} \left[\Big\lfloor\frac{r(i+1)}{m}\Big\rfloor(1+i+\frac{m}{2}(r-2))\right]+\frac{m}{2} \sum\limits_{i=0}^{m-2} \Big\lfloor\frac{r(i+1)}{m}\Big\rfloor^2.\\
\end{eqnarray*}
Using that $\sum\limits_{i=0}^{m-2} \bigg\lfloor(\frac{r(i+1)}{m})\bigg\rfloor = \frac{(r-1)(m-1)}{2}$ and $\bigg\lfloor\frac{r(i+1)}{m}\bigg\rfloor= \frac{r(i+1)}{m}-\left\{\frac{r(i+1)}{m}\right\}$, we obtain:
\begin{eqnarray*}
S(P)= O(r)+\frac{(2m-1)(m-1)}{12}r^2,\\
\end{eqnarray*}
where $O(r^t)$ denotes a polynomial in the variable $r$ of degree less or equal than $t$.
Therefore, since $\frac{g(g+1)}{2}=\frac{(m-1)^2}{8}r^2+O(r)$, the weight $W(P)$ at $P$ is given by
$$W(P)=S(P)-\frac{g(g+1)}{2}=\frac{m^2-1}{24}r^2+O(r).$$
Since $g^3-g=\frac{(m-1)^3}{8}r^3+O(r^2)$, we have (note that $W(P_{\infty})$ is irrelevant for the limit):
$$\lim_{r\rightarrow\infty}\frac{BW}{g^3-g}=\lim_{r\rightarrow\infty}\frac{rW(P)}{g^3-g}=\frac{(m^2 - 1)/24}{(m-1)^3/8}=\frac{1}{3}\frac{m+1}{(m-1)^2},$$
as desired.

\end{proof}

\end{document}